\documentclass[preprint,12pt]{elsarticle}

% Enable this line and disable the 
                                     % preceding line to obtain a two-column       
                                     % document whose style resembles the
                                     % printed Automatica style.

\usepackage{graphicx}          % Include this line if your 
                               % document contains figures,
%\usepackage[dvips]{epsfig}    % or this line, depending on which
                               % you prefer.

%%%%%%%%%%%%%%%%%%%%%%%%%%%
%%%%%%%%%%%%%%%%%%%%%%%%%%%

\usepackage[utf8]{inputenc}
\usepackage{comment}
\usepackage{url}
\usepackage{amscd}
\usepackage{amsmath}
\usepackage{eucal}
\usepackage{amssymb}
\usepackage{pstricks}
\usepackage{graphicx}
\usepackage{lineno}
\usepackage{amsthm}

\usepackage{tikz-cd}
\usetikzlibrary{matrix}
\usepackage{tikz}

\newtheorem{definition}{Definition}
\newtheorem{theorem}{Theorem}

\newtheorem{lemma}{Lemma}

\newtheorem{proposition}{Proposition}
\newtheorem{remark}{Remark}

\usepackage{empheq}

% shortcut for parentheses and sets

 % support
 % support
 % shorthand for measure integral: \lintegral{domain}{integrand}{measure}
 % \1_{A} is the indicator function on set A
 % shortcut to underset a curly brace: \undersetbrace{inline expression}{expression under brace}
 % \norm{x} is the norm of x
\numberwithin{equation}{section}

%%%%%%%%%%%%%%%%%%%%%%%%%%%%
%%%%%%%%%%%%%%%%%%%%%%%%%%%%

\begin{document}

\begin{frontmatter}
%\runtitle{Insert a suggested running title}  % Running title for regular 
                                              % papers but only if the title  
                                              % is over 5 words. Running title 
                                              % is not shown in output.

\title{Reciprocal relationship between detectability and observability in a non-uniform setting} % Title, preferably not more 
                                                % than 10 words.

 % \thanks[footnote]{The first author was partially funded by FONDECYT Regular Grant 1240361.}

\author[Huerta]{Ignacio Huerta}\ead{ignacio.huertan@usm.cl}    % Add the 
\author[Monzón]{Pablo Monzón}\ead{monzon@fing.edu.uy}               % e-mail address 
% \author[Baiae]{Publius Maro Vergilius}\ead{vergilius@culture.ir}  % (ead) as shown

\address[Huerta]{Departamento de Matem\'atica, Universidad T\'ecnica Federico Santa Mar\'ia, Casilla 110-V, Valpara\'iso, Chile.}  % Please supply                                              
\address[Monzón]{Facultad de Ingenier\'ia, Universidad de la Rep\'ublica, Julio Herrera y Reissig 565, Montevideo, Uruguay.}             % full addresses
% \address[Baiae]{The White House, Baiae}        % here.

\begin{keyword}        time varying systems; non-uniform complete observability; non-uniform complete controllability; non-uniform exponential detectability; non-uniform exponential stabilizability; non-uniform bounded growth; non-uniform Kalman condition; duality.                    % Five to ten keywords,  
\end{keyword}                             % keyword list or with the 
                                          % help of the Automatica 
                                          % keyword wizard

\begin{abstract}                          Building on the recent notion of non-uniform complete observability, and on the fact that this property ensures non-uniform exponential detectability, this paper establishes the converse implication under suitable additional assumptions. Specifically, we investigate conditions under which non-uniform exponential detectability guarantees non-uniform complete observability. Our approach is based on a refined analysis of the associated output feedback systems and on the preservation of non-uniform observability properties under such feedback transformations. These results extend the classical equivalence between observability and detectability beyond the uniform framework and provide new tools for the qualitative analysis of time-varying control systems.

\end{abstract}

\end{frontmatter}

\section{Introduction}

Observability of a system's state is a key issue in the analysis and control of dynamical systems, essentially referring to the capability of deducing the internal state from the system's output. Rudolph Kalman introduced the foundational concepts in the 1960s, creating an entire algebraic framework for studying this property. Since then, various studies have addressed related challenges, such as observer design, classes of observability, and its strong connection to controllability, which is the ability to guide a system to a desired state. Kalman initially defined observability and controllability, focusing quickly on linear time-invariant systems, where both properties have shared intrinsic algebraic characteristics and can be linked through dual and adjoint systems. He showed that a similar relationship exists in linear uniform time-variant systems using Gramian matrices. Controllability is related to the weaker notion of stabilizability, i.e., the ability to asymptotically drive the state  to the origin through a state feedback, and observability is closely related to detectability, the existence of an asymptotical observer \cite{Kwakernaak}. Recently, the concepts of non-uniform controllability and observability emerged, leading to a new category of linear time-varying systems that lie between the uniform and general cases \cite{HM,HMR}.

Detectability and stabilizability play a central role in the qualitative ana\-ly\-sis of dynamical systems, particularly when observability or controllability fail to hold globally. Detectability, in its classical formulation, requires that all unobservable components of a system be asymptotically stable, ensuring that the internal state can still be inferred with arbitrary accuracy over time despite incomplete information \cite{Kailath}. In the setting of linear time-invariant systems, Kalman showed that detectability admits a clean algebraic cha\-rac\-te\-ri\-za\-tion, and it is deeply connected to the existence of asymptotic observers and stabilizing solutions of algebraic Riccati equations \cite{Khalil,Sontag, Terrell}. This notion extends naturally—though in a more subtle manner—to linear time-varying systems, where the observability Gramian may fail to provide a uniform quantitative measure of state distinguishability over time, and the asymptotic behavior is inherently governed by the nonautonomous nature of the dynamics. In such cases, uniform detectability guarantees that all non-observable components decay at an exponential rate governed by a uniform Lyapunov function or a uniform Riccati-type inequality \cite{Tranninger}. However, the recently developed non-uniform framework reveals that detectability can persist even when exponential decay varies with time or when growth conditions are not uniformly bounded \cite{HM}. This generalized detectability captures systems where decay rates may depend on the temporal index but remain sufficiently strong to ensure asymptotic reconstruction of the internal state.
\\
Stabilizability exhibits a fully dual structure: it requires that all uncontrollable components be exponentially stable, allowing the design of a feedback control that stabilizes the overall system. In linear time-invariant and uniformly time-varying systems, stabilizability is equivalent to the existence of a feedback gain ensuring uniform exponential stability of the closed-loop system, often characterized using the controllability Gramian or algebraic/differential Riccati equations \cite{Anderson, Anderson69}. The non-uniform theory exposes richer geometric and dynamical behaviors, where stabilizability may hold even if the underlying system lacks uniform bounded growth or fails to satisfy uniform complete controllability. Here, non-uniform stabilizability means that uncontrollable subspaces still decay at time-dependent exponential rates, enabling the construction of nonautonomous feedback gains adapted to the varying dynamics. Moreover, in this generalized setting, detectability and stabilizability retain their deep interconnection through duality and Riccati-type methods provide a unifying framework for both properties under non-uniform assumptions. This dual perspective is crucial when studying output-feedback mechanisms, since maintaining observability via output injection in the non-uniform regime ultimately depends on en\-su\-ring detectability of the augmented system while simultaneously preserving stabilizability of its dual counterpart.

In this work we analyze  how observability is preserved through output feedback in the nonuniform setting. Based on the above, we also prove under what conditions nonuniform exponential detectability guarantees nonuniform complete observability.

\subsection{Structure of the article and notations}
The remainder of the article is organized as follows. The Section 2 consists of all the preliminaries necessary for understanding the concepts of con\-tro\-lla\-bi\-li\-ty and observability at their different levels; complete and uniform, in addition to their relationship with the well-known Kalman condition and its dual connection. After that, Section 3 presents the definitions and results of con\-tro\-lla\-bi\-li\-ty, observability, and Kalman condition in the non-uniform framework, highlighting the fact that the non-uniform properties of con\-tro\-lla\-bi\-li\-ty and observability are halfway between their respective complete and uniformly complete notions. Section 4 presents the first important result of this work: observability is preserved via output feedback in the non-uniform context. Subsequently, Section 5 presents and proves the main result of our article: non-uniform exponential detectability and other additional conditions allow us to conclude non-uniform complete observability, closing the pseudo-equivalence between these two concepts. Based on the duality of the concepts of controllability and observability, Section 6 presents results similar to those proved in the context of observability; controllability is preserved via state feedback, and subsequently, non-uniform exponential stabilizability implies, under additional hypotheses, the non-uniform complete controllability property.

We recall the following notations which will be used in this paper. 
A symmetric matrix $R=R^{T}\in M_{n}(\mathbb{R})$ is semi--positive definite if for any real vector $x\neq 0$, we have that
$x^{T}Rx =\langle R x, x\rangle  \geq 0$, and this property will be denoted as $M \geq 0$. In case that the above inequalities are strict, we say that $R=R^{T}$ is positive definite. Given two matrices $R, S\in M_{n}(\mathbb{R})$, we write $R\leq S$ if  $R-S\geq0$.

\section{Preliminaries} 

Consider the nonautonomous linear control system
\begin{subequations}\label{control}
	\begin{empheq}[left=\empheqlbrace]{align}
		& \dot{x}(t)=A(t)x(t)+B(t)u(t), \label{control1a} \\
		& y(t)=C(t)x(t), \label{control1b}
	\end{empheq}
\end{subequations}
where $t\to x(t)\in\mathbb{R}^{n}$ is the \textit{state} of the system, $t\to u(t)\in\mathbb{R}^{p}$ is the \textit{input}, $t\to y(t)\in\mathbb{R}^{m}$ is the \textit{output}, while $t\mapsto A(t)\in M_{n\times n}(\mathbb{R})$, $t\mapsto B(t)\in M_{n\times p}(\mathbb{R})$ and $t\mapsto C(t)\in M_{m\times n}(\mathbb{R})$ are matrix valued functions for any $t\in\mathbb{R}$. In addition, when considering the null input  $u(t) \equiv 0$, this enables us to call the linear system 
\begin{equation}
\label{lin}
\dot{x}=A(t)x
\end{equation}
as the \textit{plant} or
the \textit{uncontrolled part} of the control system \eqref{control1a}. The transition matrix of \eqref{lin} is denoted by $\Phi_{A}(t,s)$. Now, any solution of \eqref{lin} passing through
$x_{0}$ at $t=t_{0}$, is denoted by $t\mapsto x(t,t_{0},x_{0})=\Phi_{A}(t,t_{0})x_{0}$. In addition, given an input $u$, any solution of \eqref{control1a} passing through the initial condition $x_{0}$ at time $t=t_{0}$ will be denoted by $t\mapsto x(t,t_{0},x_{0},u)$. The solution of \eqref{control1a} is given by the expression:
\begin{equation}\label{eq:solution}
x(t,t_{0},x_0,u)=\Phi_{A}(t,t_{0})x_0+\int_{t_0}^t\Phi_{A}(t,\tau)B(\tau)u(\tau)d\tau.   
\end{equation}

The properties of \textit{controllability} and \textit{observability} were introduced by R. Kalman in \cite{Kalman} for the linear control system \eqref{control}\footnote{\eqref{control} refers to the control system \eqref{control1a}-\eqref{control1b}.} and we will briefly recall them by following \cite{Anderson,Sontag}:
\begin{definition}
% \label{DC1}
The state $x_{0}\in \mathbb{R}^{n}$ of the control system \eqref{control1a} is con\-tro\-lla\-ble at time $t_{0}\in\mathbb{R}$, if there exists an input $u\colon [t_{0},t_{f}]\to \mathbb{R}^{p}$ such that $x(t_{f},t_{0},x_{0},u)=0$. In addition, the control system \eqref{control1a} is: 
\begin{itemize}
\item[a)] \textbf{Controllable at time $t_{0}\in\mathbb{R}$} if any state $x_{0}$ is controllable at time $t_{0}\in\mathbb{R}$,
\item[b)] \textbf{Completely controllable} \textnormal{(CC)} if it is controllable at any time $t_{0}\in\mathbb{R}$.
\end{itemize}
\end{definition}
 
There exists a well known necessary and sufficient condition ensuring both controllability at time $t_{0}$ and complete controllability, which is stated in terms of the controllability Gramian matrix, usually defined by
\begin{equation*}
% \label{contmatrix}
W(a,b)=\displaystyle\int_{a}^{b}\Phi_{A}(a,s)B(s)B^{T}(s)\Phi_{A}^{T}(a,s)\;ds.
\end{equation*}

The control system \eqref{control1a} is {\it controllable at time $t_{0}\in\mathbb{R}$} if and only if there exists
$t_{f}>t_{0}$ such that $W(t_{0},t_{f})>0$. In addition, the system \eqref{control1a} is {\it completely controllable} if and only if for 
any $t_{0}\in\mathbb{R}$, there exists $t_{f}>t_{0}$ such that $W(t_{0},t_{f})>0$. We refer the reader to \cite{Anderson67,Kreindler} for a detailed description. 
\begin{comment}
Essentially, invertibility of the Gramian allows us to construct an explicit input function that drives the system towards the origin. By considering  a given couple of initial state $x_0$ and initial time $t_0$, together with the following input $u^{*}\colon [t_{0},t_{f}]\to \mathbb{R}^{p}$ described by:
\begin{equation}
\label{input}
u^{*}(t)=-B^T(t)\Phi_{A}^T(t_0,t)W^{-1}(t_0,t_f)x_0,
\end{equation}
we can see by (\ref{eq:solution}) that the respective solution of \eqref{control1a} with $u=u^{*}$ satisfies: 
\begin{displaymath}
\begin{array}{rl}
x(t_f,t_{0},x_{0},u^{*})&=\displaystyle\Phi_{A}(t_f,t_0)x_0-\int_{t_0}^{t_f}\Phi_{A}(t_f,\tau)B(\tau)B^T(\tau)\Phi_{A}^T(t_0,\tau)W^{-1}(t_0,t_f)x_0d\tau,\\\\
&=\displaystyle\Phi_{A}(t_f,t_0)x_0-\Phi_{A}(t_f,t_0)W(t_0,t_f) W^{-1}(t_0,t_f)x_0=0.
\end{array}
\end{displaymath}
\end{comment}

It is important to emphasize that, in some references, the controllability Gramian matrix is also denoted by 
\begin{equation*}
% \label{Kcontmatrix}
K(a,b)=\displaystyle\int_{a}^{b}\Phi_{A}(b,s)B(s)B^{T}(s)\Phi_{A}^{T}(b,s)\;ds,
\end{equation*}
which verifies the following properties
\begin{equation}
\label{GP}
\left\{\begin{array}{rcl}
K(a,b)&=&\Phi_{A}(b,a)W(a,b)\Phi_{A}^{T}(b,a) \\
W(a,b)&=&\Phi_{A}(a,b)K(a,b)\Phi_{A}^{T}(a,b),
\end{array}\right.
\end{equation}
and the above mentioned controllability condition can also be stated in terms of invertibility of $K(t_{0},t_{f})$.

A stronger property of controllability, also due to R. Kalman \cite{Kalman,Kalman69}, is given by the \textit{uniform complete controllability} (UCC). In this case, there exists a time $\sigma>0$ such that in the previous Gramian matrices, we have $t_{f}=t_{0}+\sigma$. The following definition formalizes this idea.

\begin{definition}    The linear control system \eqref{control1a} is said to be uniformly completely controllable on $\mathbb{R}$ if there exist a fixed constant $\sigma>0$ and positive numbers $\alpha_0(\sigma)$, $\beta_{0}(\sigma)$, $\alpha_1(\sigma)$ and $\beta_1(\sigma)$ such that the following relations hold for all $t\in\mathbb{R}$: 
\begin{equation}
\label{UC1}
0<\alpha_0(\sigma)I\leq W(t,t+\sigma)\leq \alpha_1(\sigma)I,
\end{equation}
\begin{equation}
\label{UC2}
0<\beta_0(\sigma)I\leq K(t,t+\sigma) \leq \beta_1(\sigma)I.
\end{equation}
\end{definition}
\begin{comment}
By using (\ref{GP}), we can see that the condition (\ref{UC2}) is equivalent to
\begin{displaymath}
0<\beta_0(\sigma)I\leq  \Phi_{A}(t+\sigma,t)W(t,t+\sigma)\Phi_{A}^{T}(t+\sigma,t)  \leq \beta_1(\sigma)I,
\end{displaymath}
which is widely employed in the literature. Nevertheless, by following \cite[p.114]{ZA}, we adopted (\ref{UC2}) by its practical convenience. It is important to emphasize that, in several references, the explicit dependence of $\alpha_{i}$ and $\beta_{i}$ ($i=0,1$)
with respect to $\sigma$ is not considered
\end{comment}

\begin{definition}
% \label{DC1}
The state $x_{0}\in \mathbb{R}^{n}$ of the control system \eqref{control} is observable in the interval $[t_0,t_{f}]$ if given an input $u\colon [t_{0},t_{f}]\to \mathbb{R}^{p}$ and the respective output $y:[t_0,t_{f}]\to\mathbb{R}^{m}$, $y(t)=C(t)x(t,t_0,x_0,u)$, the initial condition $x_0$ can be uniquely estimated from $y(t)$.

In addition, the control system \eqref{control} is: 
\begin{itemize}
\item[a)] \textbf{Observable in the interval $[t_0,t_{f}]$} if any state $x_{0}$ is observable in the interval $[t_0,t_{f}]$.
\item[b)] \textbf{Completely observable} \textnormal{(CO)} if it is observable at any interval $[t_0,t_{f}]$.
\end{itemize}
\end{definition}

\begin{remark}
 Taking into account \eqref{eq:solution}, the previous definition can be understood as the injectivity of the linear map $x_0\to C(\cdot)\Phi_{A}(\cdot,t_0)x_0|_{[t_0,t_f]}$. In other words, the system \eqref{control} is observable in the interval $[t_0,t_{f}]$ if for a given input and for any two different initial conditions, the respective outputs are distinct (see \textnormal{\cite{AEYELS1982,Callier}} for more details), noting that the matrix $B(t)$ does not influence this definition. 
\end{remark}

Similarly to the case of controllability, the observability Gramian matrix is defined as follows:
\begin{equation}
	\label{gramianobservability}
	M(a,b)=\displaystyle\int_{a}^{b}\Phi_{A}^{T}(s,a)C^{T}(s)C(s)\Phi_{A}(s,a)\;ds,
\end{equation}
The control system \eqref{control} is \textit{observable} in the interval $[t_0,t_{f}]$ if and only if $M(t_0,t_{f})>0$. In addition, the system \eqref{control} is \textit{completely observable} if for any $t_0\in\mathbb{R}$, there exists $t_{f}>t_0$ such that $M(t_0,t_{f})>0$. 

We note that there are references (for example \cite[Definition 2.6]{Ikeda2}) where another observability Gramian matrix is defined:
\begin{equation*}
% \label{Kcontmatrix}
N(a,b)=\displaystyle\int_{a}^{b}\Phi_{A}^{T}(s,b)C^{T}(s)C(s)\Phi_{A}(s,b)\;ds,
\end{equation*}
which verifies the following properties
\begin{equation}
\label{GPObservabilidad}
\left\{\begin{array}{rcl}
N(a,b)&=&\Phi_{A}^{T}(a,b)M(a,b)\Phi_{A}(a,b) \\
M(a,b)&=&\Phi_{A}^{T}(b,a)N(a,b)\Phi_{A}(b,a),
\end{array}\right.
\end{equation}
therefore, we can describe a definition in the uniform framework of this concept. 

\begin{definition}\label{UCO}    The linear time-varying control system \eqref{control} is said to be uniformly completely observable \textnormal{(UCO)} on $\mathbb{R}$ if there exists a fixed constant $\sigma>0$ and positive numbers $\bar{\alpha}_0(\sigma)$, $\bar{\beta}_0(\sigma)$, $\bar{\alpha}_1(\sigma)$ and $\bar{\beta}_1(\sigma)$ such that the following relations hold for all $t\in\mathbb{R}$: 
\begin{equation}
\label{UCO1}
0<\bar{\alpha}_0(\sigma)I\leq M(t,t+\sigma)\leq \bar{\alpha}_1(\sigma)I,
\end{equation}
\begin{equation}
\label{UCO2}
0<\bar{\beta}_0(\sigma)I\leq N(t,t+\sigma)\leq \bar{\beta}_1(\sigma)I.
\end{equation}
\end{definition}

\begin{remark}
    Throughout this work, it will be useful to consider the alternative notation $M_{A}(\cdot,\cdot)$ and $N_{A}(\cdot,\cdot)$ instead of $M(\cdot,\cdot)$ and $N(\cdot,\cdot)$ respectively, referring to the dependence of matrix $A(t)$ in the definition of both Gramian matrices.
\end{remark}

Based on the definitions of uniform complete controllability and uniform complete observability, Kalman \cite{Kalman} included a property related to the behavior of the transition matrix of system \eqref{lin}, which is known as the \textit{Kalman condition}, and states that there exists a function $\alpha\colon [0,+\infty)\to (0,+\infty)$, with $\alpha(\cdot)\in\mathcal{B}$\footnote{$\mathcal{B}$ denotes the set of functions $\alpha\colon \mathbb{R}\to \mathbb{R}$ mapping bounded sets into bounded sets.}, such that \eqref{lin} has a transition matrix verifying the property
\begin{equation}
\label{BG}
\|\Phi_{A}(t,s)\| \leq \alpha(|t-s|) \quad \textnormal{for any $t,s \in\mathbb{R}$}.
\end{equation}

In addition, in \cite[Lemma 6]{HMR} it was proved that the Kalman condition is equivalent to the \textit{uniform bounded growth} property, which states that there exist constants $K>1$ and $\beta>0$ such that:
\begin{equation}
\label{BG3}
\|\Phi_{A}(t,s)\|\leq Ke^{\beta|t-s|}\quad \textnormal{for any $t,s\in \mathbb{R}$}.
\end{equation} 

The next result, whose proof has been sketched by Kalman in \cite[p.157]{Kalman}, des\-cribes a relation between the properties of uniform complete con\-tro\-lla\-bi\-li\-ty and the Kalman condition:
\begin{proposition}
\label{Prop2}
If any two of the properties \eqref{UC1}, \eqref{UC2} and \eqref{BG} hold, the remaining one is also true.
\end{proposition}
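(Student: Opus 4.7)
The proposition is a three-way equivalence, so the plan is to establish the three pairwise implications, using the pair of conjugation identities \eqref{GP} that relate $K$, $W$, and the transition matrix $\Phi_A$. A recurring sub-argument is that an upper bound on $\|\Phi^{-1}\|$ yields the lower bound $\Phi\Phi^{T} \geq (1/\|\Phi^{-1}\|^{2}) I$, which converts control on one factor of a conjugation identity into positivity of the whole.

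For the two easy directions, consider \eqref{UC1} together with \eqref{BG}. Substituting into $K(t,t+\sigma) = \Phi_{A}(t+\sigma,t)\, W(t,t+\sigma)\, \Phi_{A}^{T}(t+\sigma,t)$: the upper bound on $K$ is immediate from $\|\Phi_{A}(t+\sigma,t)\| \leq \alpha(\sigma)$ and $W \leq \alpha_{1}(\sigma) I$, while the lower bound uses $\|\Phi_{A}(t,t+\sigma)\| \leq \alpha(\sigma)$, whence $\Phi_{A}(t+\sigma,t)\Phi_{A}^{T}(t+\sigma,t) \geq \alpha(\sigma)^{-2} I$, combined with $W \geq \alpha_{0}(\sigma) I$ to give $K \geq \alpha_{0}(\sigma)\alpha(\sigma)^{-2} I$. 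This yields \eqref{UC2}. The reverse direction \eqref{UC2} + \eqref{BG} $\Rightarrow$ \eqref{UC1} is entirely symmetric, using the companion identity $W = \Phi_{A}(t,t+\sigma)\, K\, \Phi_{A}^{T}(t,t+\sigma)$.

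The delicate direction is \eqref{UC1} + \eqref{UC2} $\Rightarrow$ \eqref{BG}. Here $K = \Phi W \Phi^{T}$ with $W \geq \alpha_{0}(\sigma) I$ gives $K \geq \alpha_{0}(\sigma) \Phi \Phi^{T}$, and combined with $K \leq \beta_{1}(\sigma) I$ this forces $\Phi_{A}(t+\sigma,t)\Phi_{A}^{T}(t+\sigma,t) \leq (\beta_{1}(\sigma)/\alpha_{0}(\sigma)) I$, so $\|\Phi_{A}(t+\sigma,t)\| \leq \sqrt{\beta_{1}(\sigma)/\alpha_{0}(\sigma)}$ uniformly in $t$. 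The companion identity supplies the analogous bound on $\|\Phi_{A}(t,t+\sigma)\|$. Composition over consecutive windows of length $\sigma$ then produces exponential-type control $\|\Phi_{A}(t+n\sigma,t)\| \leq (\beta_{1}(\sigma)/\alpha_{0}(\sigma))^{n/2}$ in both time directions. The main obstacle is that \eqref{GP} only produces bounds at displacement exactly $\sigma$: the transition matrix must still be controlled on subintervals of length strictly less than $\sigma$, uniformly in the base point. I would absorb this residue into the multiplicative prefactor of the exponential bound by invoking local continuity of $\Phi_{A}(\cdot,\cdot)$ together with the fact that any short sub-window can be embedded in one of length $\sigma$ where the previous bounds apply, and then use the equivalence of \eqref{BG} with its exponential form \eqref{BG3} established in \cite[Lemma~6]{HMR}.
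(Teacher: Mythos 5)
Your first two implications are correct and are the standard conjugation argument (the paper itself gives no detailed proof, deferring to Kalman's sketch, which proceeds the same way for these directions): the identities \eqref{GP} together with the observation that $\|\Phi^{-1}\|\leq\alpha(\sigma)$ forces $\Phi\Phi^{T}\geq\alpha(\sigma)^{-2}I$ transfer \eqref{UC1} into \eqref{UC2} and back under \eqref{BG}. In the third implication, the window bounds $\|\Phi_{A}(t+\sigma,t)\|\leq\sqrt{\beta_1(\sigma)/\alpha_0(\sigma)}$ and $\|\Phi_{A}(t,t+\sigma)\|\leq\sqrt{\alpha_1(\sigma)/\beta_0(\sigma)}$ and their composition over consecutive windows are also fine.

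The gap is exactly at the final step you flag, and your proposed fix does not work. First, ``local continuity of $\Phi_{A}(\cdot,\cdot)$'' only bounds the transition matrix on compact sets of $(t,s)$; \eqref{BG} demands a bound depending on $|t-s|$ alone, uniformly over all base points $t\in\mathbb{R}$, and nothing in the hypotheses makes $A(\cdot)$ bounded, so $\sup_{t}\|\Phi_{A}(t+\tau,t)\|$ for $0<\tau<\sigma$ is not controlled by continuity. Second, embedding the short sub-window into a length-$\sigma$ window only bounds the \emph{product} $\Phi_{A}(t+\sigma,t)=\Phi_{A}(t+\sigma,s)\Phi_{A}(s,t)$, not each factor: already in the scalar case a huge excursion $\phi(s,t)$ inside the window can be cancelled by the complementary factor, leaving all your length-$\sigma$ bounds intact. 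What actually closes this step --- and is the content of Kalman's sketch --- is the additivity (splitting) identity for the Gramians: for $a\leq c\leq b$,
\begin{equation*}
W(a,b)=W(a,c)+\Phi_{A}(a,c)\,W(c,b)\,\Phi_{A}^{T}(a,c),
\end{equation*}
and its analogue for $K$. For instance, for $t\leq s\leq t+\sigma$ one has $W(s,t+2\sigma)\geq W(s,s+\sigma)\geq\alpha_{0}(\sigma)I$, while splitting $W(t,t+2\sigma)$ at $t+\sigma$ and using your bound $\|\Phi_{A}(t,t+\sigma)\|^{2}\leq\alpha_{1}(\sigma)/\beta_{0}(\sigma)$ gives $W(t,t+2\sigma)\leq\alpha_{1}(\sigma)\bigl(1+\alpha_{1}(\sigma)/\beta_{0}(\sigma)\bigr)I$; since the splitting identity also yields $W(t,t+2\sigma)\geq\Phi_{A}(t,s)W(s,t+2\sigma)\Phi_{A}^{T}(t,s)$, this produces a bound on $\|\Phi_{A}(t,s)\|$ over sub-windows that is uniform in $t$, and the forward sub-window bound follows analogously from the $K$-splitting. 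With those uniform sub-window bounds in place, your composition argument and \cite[Lemma 6]{HMR} do yield \eqref{BG3}, hence \eqref{BG}.
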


In order to pave the way for detailing the close relationship between observability and controllability, which is well known in control theory, we consider the \textit{adjoint system} and the \textit{dual system} (see \cite{Callier,IlchmannNote} for more details) of 
\eqref{control} described by:% \begin{subequations}
%   \begin{empheq}[left=\empheqlbrace]{align}
\begin{equation}
\label{traspuesto1a}
\dot{x}(t)=-A^{T}(t)x(t)-C^{T}(t)u(t), 
    % & y(t)=B^{T}(t)x(t), \label{traspuesto1b}
  % \end{empheq}
% \end{subequations}
\end{equation}
\begin{equation}
    \label{dual1a}
\dot{x}(t)=A^{T}(-t)x(t)+C^{T}(-t)u(t),
\end{equation}
with $t\in \mathbb{R}$, $x(t)\in\mathbb{R}^{n}$, $u(t)\in\mathbb{R}^{m}$. The transition matrices of the plants of \eqref{traspuesto1a} and \eqref{dual1a} are denoted by $\Psi^{a}(\cdot,\cdot)$ and $\Psi^{d}(\cdot,\cdot)$ respectively, which satisfy the following relation with the original transition matrix of the system \eqref{lin}:
\begin{equation}
\label{Psiadjunto}
    \Psi^{a}(t,\tau)=\Phi_{A}^{T}(\tau,t),
\end{equation}
\begin{equation}
\label{Psidual}
    \Psi^{d}(t,\tau)=\Phi_{A}^{T}(-\tau,-t).
\end{equation}
\begin{comment}
It is important to emphasize that depending on the domain $J$ imposed for the control system \eqref{control1a}-\eqref{control1b}, the domain for the adjoint and dual system will be determined. For example, in the case where $J=\mathbb{R}_0^{+}$, we will have that the domains for the adjoint and dual systems will be $J_a=\mathbb{R}_{0}^{+}$ and $J_d=\mathbb{R}_{0}^{-}$ respectively. At some point in this paper we will consider the case where $J=J_a=J_d=\mathbb{R}$.
\end{comment}
% In fact, we have the following calculation that justifies it:
% $$\begin{array}{rcl}
% \displaystyle\frac{d}{dt}(\Psi_{-A^{T}}(t,\tau))&=&\displaystyle\frac{d}{dt}(\Phi_{A}^{T}(\tau,t))=\frac{d}{dt}(\Phi_{A}(\tau,t))^{T}=-(\Phi_{A}(\tau,t)A(t))^{T},\\
% &=&-A^{T}(t)\Phi_{A}^{T}(\tau,t)
% =-A^{T}(t)\Psi_{-A^{T}}(t,\tau).
% \end{array}$$

The following result establishes the relationship between the Gramian matrices of the system \eqref{control} and the respective Gramian matrices of the systems \eqref{traspuesto1a} and \eqref{dual1a} (see \cite[Theorem 3]{HM} for the proof).

\begin{proposition} 
\label{adjuntoydual}
% Assume that the system \eqref{control1a} admits nonuniform bounded growth. 
By considering $W^{a}$, $K^a$, $W^{d}$ and $K^d$ as the Gramian matrices of controllability of the systems \eqref{traspuesto1a} and \eqref{dual1a} respectively, then the following identities related to the Gramian matrices of observability $M$ and $N$ of the system \eqref{control} are satisfied:
\begin{equation} 
\label{identidadM}
M(t,t+\sigma)=W^{a}(t,t+\sigma)=K^{d}(-t-\sigma,-t).
\end{equation}
\begin{equation}
\label{identidadN}
 N(t,t+\sigma)=K^{a}(t,t+\sigma)=W^{d}(-t-\sigma,-t).
\end{equation}
\end{proposition}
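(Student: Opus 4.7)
The plan is to reduce each of the four claimed identities to a direct computation using the explicit definitions of the Gramians together with the transition-matrix relations \eqref{Psiadjunto} and \eqref{Psidual}. I would first note that the ``input matrix'' of the adjoint system \eqref{traspuesto1a} is $-C^T(t)$, so the product $(-C^T(t))(-C(t))=C^T(t)C(t)$ matches exactly the kernel appearing in the observability Gramians $M$ and $N$. Therefore, expanding the definition of the controllability Gramian $W^a$ and replacing $\Psi^{a}(t,s)$ by $\Phi_A^T(s,t)$ via \eqref{Psiadjunto}, I obtain an integrand term-by-term identical to that of \eqref{gramianobservability}, which yields $W^{a}(t,t+\sigma)=M(t,t+\sigma)$. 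The identity $N(t,t+\sigma)=K^{a}(t,t+\sigma)$ follows by the same manipulation applied to $K^{a}$, with $b=t+\sigma$ playing the role of the second argument.

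For the dual-system identities, the extra ingredient is a change of variables $u=-s$. I would start from
\begin{equation*}
K^{d}(-t-\sigma,-t)=\int_{-t-\sigma}^{-t}\Psi^{d}(-t,s)\,C^T(-s)C(-s)\,(\Psi^{d})^T(-t,s)\,ds,
\end{equation*}
apply \eqref{Psidual} to rewrite $\Psi^{d}(-t,s)=\Phi_A^T(-s,t)$, and then substitute $u=-s$. After reversing the orientation of the integration interval, the factor $C^T(-s)C(-s)$ becomes $C^T(u)C(u)$ and the integrand coincides exactly with that of $M(t,t+\sigma)$. The remaining identity $W^{d}(-t-\sigma,-t)=N(t,t+\sigma)$ is obtained by the same change of variables applied to $W^{d}$, now using $\Psi^{d}(-t-\sigma,s)=\Phi_A^T(-s,t+\sigma)$, so that the integrand lines up with that of $N(t,t+\sigma)$.

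Conceptually, the argument is bookkeeping rather than analysis; there is no genuine obstacle beyond keeping the conventions consistent. The one place where care is required is the dual computation, where one must simultaneously track the sign flip in both time arguments of $\Psi^{d}$, the composition of $C$ with time-reversal, and the reversal of the orientation of the integration interval under $u=-s$. Once these three sign conventions are aligned, each of the four claims reduces to equality of integrands, so the whole proposition is really two parallel one-line calculations (for the adjoint) followed by the same two calculations preceded by a time-reversal substitution (for the dual).
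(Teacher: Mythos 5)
Your proof is correct: all four identities reduce, exactly as you compute, to substituting the transition-matrix relations \eqref{Psiadjunto} and \eqref{Psidual} into the controllability Gramians of \eqref{traspuesto1a} and \eqref{dual1a} (with the squared input factors $(-C^{T})(-C)=C^{T}C$ and $C^{T}(-\cdot)C(-\cdot)$) and, for the dual system, performing the change of variables $u=-s$ with the orientation reversal. The paper itself defers the proof to \cite[Theorem 3]{HM}, and your direct bookkeeping argument is precisely the standard verification that result rests on, so there is nothing to add.
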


Based on Proposition \ref{adjuntoydual}, we have the ability to identify controllability and observability results depending on the system under consideration. This result allows us to obtain the well known relationship between controllability and observability in the uniform context:   
\begin{theorem} 
\label{adjuntoU}
% Assume that the system \eqref{control1a} admits non-uniform bounded growth. 
    The system \eqref{control} is \textnormal{UCO} if and only if the adjoint system \eqref{traspuesto1a} is \textnormal{UCC}. 
\end{theorem}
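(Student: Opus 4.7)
The plan is to reduce the theorem to a direct application of Proposition \ref{adjuntoydual}, which already identifies the observability Gramians of \eqref{control} with the controllability Gramians of the adjoint system \eqref{traspuesto1a}. Since both \textnormal{UCO} and \textnormal{UCC} are defined by the existence of a uniform $\sigma>0$ together with two‐sided positive matrix bounds on the relevant Gramians, once the Gramians are shown to be literally equal, the two notions coincide.

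First I would write out explicitly what each of the two properties requires. By Definition \ref{UCO}, system \eqref{control} is \textnormal{UCO} if and only if there exist $\sigma>0$ and positive constants $\bar{\alpha}_0,\bar{\alpha}_1,\bar{\beta}_0,\bar{\beta}_1$ such that, for every $t\in\mathbb{R}$,
\begin{equation*}
0<\bar{\alpha}_0 I\leq M(t,t+\sigma)\leq \bar{\alpha}_1 I,\qquad 0<\bar{\beta}_0 I\leq N(t,t+\sigma)\leq \bar{\beta}_1 I.
\end{equation*}
On the other hand, the adjoint system \eqref{traspuesto1a} is \textnormal{UCC} if and only if there exist $\sigma>0$ and positive constants $\alpha_0,\alpha_1,\beta_0,\beta_1$ such that, for every $t\in\mathbb{R}$,
\begin{equation*}
0<\alpha_0 I\leq W^{a}(t,t+\sigma)\leq \alpha_1 I,\qquad 0<\beta_0 I\leq K^{a}(t,t+\sigma)\leq \beta_1 I.
\end{equation*}

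Next I would invoke the identities \eqref{identidadM}–\eqref{identidadN} of Proposition \ref{adjuntoydual}, namely $M(t,t+\sigma)=W^{a}(t,t+\sigma)$ and $N(t,t+\sigma)=K^{a}(t,t+\sigma)$ for all $t\in\mathbb{R}$ and all $\sigma>0$. With these pointwise equalities at hand, both chains of inequalities above are literally the same system of conditions: any admissible $(\sigma,\bar{\alpha}_0,\bar{\alpha}_1,\bar{\beta}_0,\bar{\beta}_1)$ for \textnormal{UCO} serves as $(\sigma,\alpha_0,\alpha_1,\beta_0,\beta_1)$ for \textnormal{UCC} of \eqref{traspuesto1a}, and conversely. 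This yields the equivalence in both directions simultaneously.

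There is essentially no obstacle here beyond bookkeeping; the substantive content is carried entirely by Proposition \ref{adjuntoydual}. The only point requiring minor care is to make sure the same value of $\sigma$ is used on both sides, which is automatic because the identities hold for every $\sigma>0$, so one can transfer the fixed $\sigma$ from one formulation to the other without modification. I would close the proof by stating both implications explicitly, perhaps as a one-line corollary of the Gramian identities.
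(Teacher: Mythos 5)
Your proposal is correct and matches the paper's own (implicit) argument: the paper derives Theorem \ref{adjuntoU} directly from the Gramian identities $M(t,t+\sigma)=W^{a}(t,t+\sigma)$ and $N(t,t+\sigma)=K^{a}(t,t+\sigma)$ of Proposition \ref{adjuntoydual}, exactly as you do. Your added remark that the same $\sigma$ transfers between the two definitions is the only non-trivial bookkeeping point, and you handle it correctly.
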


\begin{theorem} 
\label{dualU}
% Assume that the system \eqref{control1a} admits non-uniform bounded growth. 
    The system \eqref{control} is \textnormal{UCO} if and only if the dual system \eqref{dual1a} is \textnormal{UCC}. 
\end{theorem}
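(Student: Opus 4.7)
The plan is to derive Theorem \ref{dualU} as a direct corollary of Proposition \ref{adjuntoydual}, exactly in the same spirit as Theorem \ref{adjuntoU}. The two Gramian identities
\begin{equation*}
M(t,t+\sigma)=K^{d}(-t-\sigma,-t), \qquad N(t,t+\sigma)=W^{d}(-t-\sigma,-t)
\end{equation*}
establish a pointwise correspondence between the observability Gramians of \eqref{control} and the controllability Gramians of the dual system \eqref{dual1a}; the content of the theorem is then essentially that this pointwise correspondence lifts to uniform bounds, because the substitution $t\mapsto -t-\sigma$ is a bijection of $\mathbb{R}$ onto itself.

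For the implication UCO $\Rightarrow$ UCC of the dual, I would fix $\sigma>0$ and positive constants $\bar{\alpha}_0,\bar{\alpha}_1,\bar{\beta}_0,\bar{\beta}_1$ provided by Definition \ref{UCO}. For any $s\in\mathbb{R}$ I set $t=-s-\sigma$, which is again an arbitrary real number. Plugging into the first identity yields $K^{d}(s,s+\sigma)=M(t,t+\sigma)$, so the estimate $\bar{\alpha}_0 I\leq M(t,t+\sigma)\leq \bar{\alpha}_1 I$ transfers verbatim to $K^{d}(s,s+\sigma)$, giving \eqref{UC2} for the dual system with $\beta_0=\bar{\alpha}_0$ and $\beta_1=\bar{\alpha}_1$. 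The second identity similarly turns the bounds on $N(t,t+\sigma)$ into bounds on $W^{d}(s,s+\sigma)$, yielding \eqref{UC1} for the dual system. Thus \eqref{dual1a} is UCC with the same $\sigma$.

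The reverse implication UCC of \eqref{dual1a} $\Rightarrow$ UCO of \eqref{control} is obtained by running the same argument backwards: given uniform bounds on $W^{d}(s,s+\sigma)$ and $K^{d}(s,s+\sigma)$, and given any $t\in\mathbb{R}$, I take $s=-t-\sigma$ and read the identities in the opposite direction to conclude $\bar{\alpha}_0 I\leq M(t,t+\sigma)\leq \bar{\alpha}_1 I$ and $\bar{\beta}_0 I\leq N(t,t+\sigma)\leq \bar{\beta}_1 I$, which is precisely \eqref{UCO1}--\eqref{UCO2}.

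There is no real obstacle here beyond bookkeeping: the only subtlety is ensuring that, because the map $s\mapsto -s-\sigma$ is surjective on $\mathbb{R}$, no portion of the time axis is lost when the uniform estimates are transported across the identity. Compared with Theorem \ref{adjuntoU}, where $M$ and $W^{a}$ share the same time window $[t,t+\sigma]$ and the correspondence is immediate, the dual case merely adds the reflection $t\mapsto -t-\sigma$, which preserves uniformity precisely because the assumed bounds are assumed to hold for \emph{all} $t\in\mathbb{R}$.
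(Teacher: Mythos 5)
Your proposal is correct and follows the same route as the paper: the paper derives Theorem \ref{dualU} directly from the Gramian identities of Proposition \ref{adjuntoydual}, which is precisely your argument. Your explicit handling of the time reflection $t\mapsto -t-\sigma$ (and the observation that it is a bijection of $\mathbb{R}$, so uniformity is preserved) simply spells out the bookkeeping the paper leaves implicit.
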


With this same approach, it is straightforward to obtain a result such as the one stated in Proposition \ref{Prop2}, where the Kalman condition is also related to uniform complete observability (see \cite{Anderson69,Kalman} for details). 

\begin{theorem}
\label{p2l3obs}
    In the uniform context, any two of the properties \eqref{UCO1}, \eqref{UCO2} and \eqref{BG} imply the third one.
\end{theorem}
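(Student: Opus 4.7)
The plan is to reduce this theorem to the already-known controllability version (Proposition \ref{Prop2}) by invoking the duality between the system \eqref{control} and its adjoint \eqref{traspuesto1a}. Everything is in place for this: Proposition \ref{adjuntoydual} identifies the two observability Gramians of \eqref{control} with the two controllability Gramians of \eqref{traspuesto1a}, and the identity \eqref{Psiadjunto} relates the transition matrices of the two plants.

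First, I would translate each of the three properties from the original system to the adjoint system. The identities $M(t,t+\sigma)=W^{a}(t,t+\sigma)$ and $N(t,t+\sigma)=K^{a}(t,t+\sigma)$ from \eqref{identidadM}--\eqref{identidadN} make \eqref{UCO1} literally identical to \eqref{UC1} applied to \eqref{traspuesto1a}, and likewise \eqref{UCO2} literally identical to \eqref{UC2} for \eqref{traspuesto1a}. Hence the same constants $\bar{\alpha}_{i}(\sigma)$ and $\bar{\beta}_{i}(\sigma)$ serve in both contexts.

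Next, I would show that the Kalman condition \eqref{BG} for $\Phi_{A}$ is equivalent to the Kalman condition for the transition matrix $\Psi^{a}$ of the adjoint plant. From \eqref{Psiadjunto} we have $\Psi^{a}(t,\tau)=\Phi_{A}^{T}(\tau,t)$, and since the operator norm is invariant under transposition,
\begin{equation*}
\|\Psi^{a}(t,\tau)\|=\|\Phi_{A}^{T}(\tau,t)\|=\|\Phi_{A}(\tau,t)\|,
\end{equation*}
so a bound $\|\Phi_{A}(\tau,t)\|\leq \alpha(|\tau-t|)$ for every $t,\tau\in\mathbb{R}$ is exactly the Kalman condition for $\Psi^{a}$, with the same function $\alpha\in\mathcal{B}$ (equivalently, the same constants $K,\beta$ in \eqref{BG3}).

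Finally, I would apply Proposition \ref{Prop2} to the adjoint system \eqref{traspuesto1a}: any two of \eqref{UC1}, \eqref{UC2}, \eqref{BG} for that system force the third one. Translating each of the three implications back via the two dictionaries established above yields that any two of \eqref{UCO1}, \eqref{UCO2}, \eqref{BG} for \eqref{control} imply the remaining one. I do not anticipate a genuine obstacle: the whole argument is bookkeeping through the duality. The only point requiring care is verifying that the quantifier structure of the uniform estimates matches on both sides (the same $\sigma>0$ works uniformly in $t\in\mathbb{R}$), which is automatic because the identities in Proposition \ref{adjuntoydual} hold for every $t$ and every $\sigma$.
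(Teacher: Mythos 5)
Your proposal is correct and coincides with the paper's own argument: the paper proves Theorem \ref{p2l3obs} precisely by the duality route, using Proposition \ref{adjuntoydual} to identify \eqref{UCO1}--\eqref{UCO2} with \eqref{UC1}--\eqref{UC2} for the adjoint system \eqref{traspuesto1a} and the transpose relation \eqref{Psiadjunto} to transfer the Kalman condition \eqref{BG}, then invoking Proposition \ref{Prop2}. Your write-up merely spells out the bookkeeping (norm invariance under transposition, matching quantifiers) that the paper leaves implicit by declaring the reduction ``straightforward.''
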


\section{Non-uniform framework for the controllability and observability}

In order to begin this section, we propose the following motivating linear system
\begin{equation}
\label{EjemploCrecimientoAcotadoNoUniforme}
\dot{x}=-x\sin(x),
\end{equation}
which does not verify the Kalman condition (see \cite[Example 4]{HM} for a detailed proof) and by considering Theorem \ref{p2l3obs}, we can ensure that the system
$$\left\{\begin{array}{rcl}
     \dot{x}&=&-x\sin(x),\\
     y&=&x, \end{array}\right.
     $$
is not uniformly completely observable. It is natural to think of a property that extends to the non-uniform case. This new concept is known as \textit{non-uniform bounded growth} and its definition is detailed below.

\begin{definition} 
 \label{NUBGT}
The linear system \eqref{lin} has a non-uniform bounded growth on $\mathbb{R}$ if there exist constants $K_{0}>0$, $a>0$ and $\eta>0$ such that its transition matrix satisfies
\begin{equation}
\label{Phibound}
\|\Phi_{A}(t,\tau)\|\leq K_{0}e^{\eta |\tau| }e^{a|t-\tau|}\quad \textnormal{for any $t,\tau\in \mathbb{R}$}.
\end{equation}
\end{definition}

We emphasize that in \cite[Example 4]{HM} it was proved that system \eqref{EjemploCrecimientoAcotadoNoUniforme} verifies this property. With this in mind, it makes sense to ask about the existence of new controllability and observability properties that adapt to non-uniform bounded growth condition. We call them \textit{non-uniform complete controllability} (NUCC) and \textit{non-uniform complete observability} (NUCO). The NUCC concept was presented in \cite{HMR}, while the NUCO idea was developed in \cite{HM}. Both are directly related via duality and both share a strong relation with a non-uniform form of the Kalman condition. Here, we put all the re\-le\-vant results together, in order to move towards the main goals of the article.\\   

\begin{definition}
\label{NUIMBU}
The control system \eqref{control1a} has the \textit{non-uniform Kalman condition} if there exist $\nu>0$ and a function  $\alpha(\cdot)\in\mathcal{B}$ satisfying
\begin{equation}
\label{crec-acot}
\|\Phi_{A}(t,\tau)\|\leq e^{\nu\tau}\,\alpha(|t-\tau|)  \quad \textnormal{for any $t,\tau\in\mathbb{R}$}.
\end{equation}
\end{definition}

\begin{remark}
\label{Kalmanproperty}
   We note that non-uniform bounded growth is a particular case of the non-uniform Kalman property.
\end{remark}

\begin{definition}
\label{DEFNUCC}
The linear control system \eqref{control1a} is said to be non-uniformly completely controllable \textnormal{(NUCC)} on $\mathbb{R}$ if there exist fixed numbers $\mu_0\geq0$, $\mu_1\geq0$, $\tilde{\mu}_0\geq0$, $\tilde{\mu}_1\geq0$ and functions $\alpha_0$, $\beta_{0}$, $\alpha_1$, $\beta_1:[0,+\infty)\to(0,+\infty)$ such that for any $t\in \mathbb{R}$, there exists $\sigma_0(t)>0$ with:
\begin{equation}
\label{alpha0alpha1}
    0<e^{-2\mu_0 t}\alpha_0(\sigma)I\leq W(t,t+\sigma)\leq e^{2\mu_1 t}\alpha_1(\sigma)I,
\end{equation}
\begin{equation}
\label{K}
    0<e^{-2\tilde{\mu}_0 t}\beta_0(\sigma) I\leq K(t,t+\sigma)\leq e^{2\tilde{\mu}_1 t}\beta_1(\sigma) I, 
\end{equation}
for every $\sigma\geq \sigma_{0}(t)$.
\end{definition}

\begin{definition}
% \label{NUCO}
The linear control system \eqref{control} is said to be non-uniformly completely observable\textnormal{(NUCO)} on $\mathbb{R}$ if there exist fixed numbers $\nu_0\geq0$, $\nu_1\geq0$, $\bar{\nu}_0\geq0$, $\bar{\nu}_1\geq0$ and functions $\vartheta_0$, $\varrho_0$, $\vartheta_1$, $\varrho_1:[0,+\infty)\to (0,+\infty)$ such that for any $t\in \mathbb{R}$, there exists $\sigma_0(t)>0$ with:
\begin{equation}
\label{Mobserv}
e^{-2\nu_0 |t|}\vartheta_0(\sigma)I\leq M(t,t+\sigma)\leq  e^{2\nu_1 |t|}\vartheta_1(\sigma)I
\end{equation}
\begin{equation}
\label{Mobserv2}
e^{-2\bar{\nu}_0 |t|}\varrho_0(\sigma)I\leq N(t,t+\sigma)\leq  e^{2\bar{\nu}_1 |t|}\varrho_1(\sigma)I
\end{equation}
for every $\sigma\geq\sigma_0(t)$.
\end{definition}

By considering Proposition \ref{adjuntoydual}, we obtain the following results that relate the controllability and observability of the control system and its adjoint and dual system in the non-uniform framework.

\begin{theorem} 
\label{adjuntoNU}
% Assume that the system \eqref{control1a} admits non-uniform bounded growth. 
    The system \eqref{control} is \textnormal{NUCO} if and only if the adjoint system \eqref{traspuesto1a} is \textnormal{NUCC}. 
\end{theorem}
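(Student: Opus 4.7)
The plan is to derive the equivalence as an immediate consequence of the Gramian identities recorded in Proposition \ref{adjuntoydual}, namely $M(t,t+\sigma)=W^{a}(t,t+\sigma)$ and $N(t,t+\sigma)=K^{a}(t,t+\sigma)$. These assert that the two observability Gramians of the original system \eqref{control} coincide, as matrix-valued functions of $t$ and $\sigma$, with the two controllability Gramians of the adjoint system \eqref{traspuesto1a}. Consequently, any two-sided quantitative bound of the NUCO type on $M$ and $N$ is literally a two-sided bound of the NUCC type on $W^{a}$ and $K^{a}$, and conversely. The argument is therefore a translation-of-hypotheses proof: no new dynamical estimate needs to be produced, only a careful bookkeeping of the constants and shape functions.

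For the forward implication, I would assume that \eqref{control} is NUCO, fix an arbitrary $t\in\mathbb{R}$, and take the $\sigma_{0}(t)>0$ provided by the definition so that \eqref{Mobserv}-\eqref{Mobserv2} hold for every $\sigma\geq\sigma_{0}(t)$. Substituting $M=W^{a}$ and $N=K^{a}$ via \eqref{identidadM}-\eqref{identidadN}, the same $\sigma_{0}(t)$ and the same shape functions $\vartheta_{0},\vartheta_{1},\varrho_{0},\varrho_{1}$ witness the NUCC bounds \eqref{alpha0alpha1}-\eqref{K} for the adjoint system, with the NUCC exponents $\mu_{0},\mu_{1},\tilde{\mu}_{0},\tilde{\mu}_{1}$ obtained from $\nu_{0},\nu_{1},\bar{\nu}_{0},\bar{\nu}_{1}$. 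The converse implication is entirely symmetric: starting from \eqref{alpha0alpha1}-\eqref{K} for $W^{a}$ and $K^{a}$, the same identities transport the bounds back onto $M$ and $N$, reconstructing \eqref{Mobserv}-\eqref{Mobserv2} for \eqref{control} with analogous transfer of constants.

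The only delicate point, and what I would expect to be the main technical obstacle, is reconciling the appearance of $|t|$ in the exponents of the NUCO definition with the bare variable $t$ appearing in the NUCC definition. This is resolved by a case split on $t\geq 0$ and $t<0$: for $t\geq 0$ the two families of exponents coincide, while for $t<0$ one absorbs the sign mismatch by enlarging the NUCC/NUCO exponents accordingly, leaving the qualitative form of the bounds unchanged and preserving the shape functions up to renaming. Once this bookkeeping is carried out, the two implications collapse into the direct substitution described above, and the equivalence follows at once from Proposition \ref{adjuntoydual}.
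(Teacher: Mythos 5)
Your overall strategy is exactly the paper's: Theorem \ref{adjuntoNU} is presented as an immediate consequence of the Gramian identities \eqref{identidadM}--\eqref{identidadN} of Proposition \ref{adjuntoydual}, namely $M(t,t+\sigma)=W^{a}(t,t+\sigma)$ and $N(t,t+\sigma)=K^{a}(t,t+\sigma)$, and the paper offers no argument beyond this substitution. So your translation-of-hypotheses proof, keeping the same $\sigma_{0}(t)$ and transporting the shape functions and exponents across the identities, is the intended one.

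There is, however, a genuine flaw in the patch you propose for the only point you correctly identify as delicate: the $|t|$ in \eqref{Mobserv}--\eqref{Mobserv2} versus the bare $t$ in \eqref{alpha0alpha1}--\eqref{K}. Your case split fails in the direction NUCO $\Rightarrow$ NUCC. For $t<0$ the NUCC lower bound reads $e^{-2\mu_0 t}\alpha_0(\sigma)=e^{2\mu_0|t|}\alpha_0(\sigma)\geq\alpha_0(\sigma)$, a quantity that grows (or, when $\mu_0=0$, is constant) as $t\to-\infty$, whereas the NUCO hypothesis only yields $M(t,t+\sigma)\geq e^{-2\nu_0|t|}\vartheta_0(\sigma)$, which decays; since $\mu_0\geq 0$ is required, no ``enlarging'' of the exponents can help --- increasing $\mu_0$ makes the required lower bound at negative times \emph{stronger}, not weaker. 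The same obstruction appears for the upper bound, since $e^{2\mu_1 t}\alpha_1(\sigma)=e^{-2\mu_1|t|}\alpha_1(\sigma)$ decays for $t<0$ while the NUCO upper bound grows. (The converse direction, NUCC $\Rightarrow$ NUCO, does go through at negative times, because there the $|t|$-bounds are the weaker ones.) The honest resolution is not a case split but the observation that both definitions are meant to carry the same exponent convention --- with $e^{\mp 2\mu_i|t|}$ in Definition \ref{DEFNUCC}, matching the NUCO definition, the substitution is verbatim, with $\mu_i=\nu_i$, $\tilde{\mu}_i=\bar{\nu}_i$, $\alpha_i=\vartheta_i$, $\beta_i=\varrho_i$, and no case analysis at all. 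Under the literal mixed convention, the equivalence cannot be recovered by bookkeeping, so your instinct about where the difficulty lies was right, but the claimed fix does not close it.
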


\begin{theorem} 
\label{dualNU}
% Assume that the system \eqref{control1a} admits non-uniform bounded growth. 
    The system \eqref{control} is \textnormal{NUCO} if and only if the dual system \eqref{dual1a} is \textnormal{NUCC}. 
\end{theorem}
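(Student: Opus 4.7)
The key ingredient is Proposition~\ref{adjuntoydual}, which, applied to the dual system \eqref{dual1a}, gives the identities
\begin{equation*}
M(t,t+\sigma)=K^{d}(-t-\sigma,-t)\quad\text{and}\quad N(t,t+\sigma)=W^{d}(-t-\sigma,-t),
\end{equation*}
valid for every $t\in\mathbb{R}$ and $\sigma>0$. My plan is to use these identities as bridges between the non-uniform observability bounds \eqref{Mobserv}-\eqref{Mobserv2} for $(M,N)$ and the non-uniform controllability bounds \eqref{alpha0alpha1}-\eqref{K} for $(W^{d},K^{d})$, by applying the change of variable $s:=-t-\sigma$ on the time parameter.

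For the direction NUCO of \eqref{control} $\Rightarrow$ NUCC of \eqref{dual1a}, substituting $t=-s-\sigma$ in the first identity yields $K^{d}(s,s+\sigma)=M(-s-\sigma,-s)$, so \eqref{Mobserv} delivers
\begin{equation*}
e^{-2\nu_0|s+\sigma|}\vartheta_0(\sigma)I\leq K^{d}(s,s+\sigma)\leq e^{2\nu_1|s+\sigma|}\vartheta_1(\sigma)I.
\end{equation*}
The triangle inequalities $\bigl||s|-\sigma\bigr|\leq|s+\sigma|\leq|s|+\sigma$ allow me to absorb the $\sigma$-contribution into new $\sigma$-dependent prefactors (for instance $\tilde\vartheta_1(\sigma):=e^{2\nu_1\sigma}\vartheta_1(\sigma)$), leaving only an exponential factor in $|s|$. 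Choosing the dual NUCC rates $\tilde\mu_0^d,\tilde\mu_1^d$ large enough to dominate $\nu_0,\nu_1$ on both half-lines then yields the bound \eqref{K}. The same argument applied to the second identity with \eqref{Mobserv2} gives \eqref{alpha0alpha1} for $W^{d}$, and the threshold $\sigma_0^{d}(s)$ is inherited from $\sigma_0(-s-\sigma)$ through the change of variable.

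The converse direction is perfectly symmetric: starting from the NUCC bounds on $W^{d},K^{d}$ and inverting the change of variable via $t=-s-\sigma$, one recovers \eqref{Mobserv}-\eqref{Mobserv2} for $M,N$ after the same prefactor manipulation. The main obstacle I foresee is the notational mismatch between the signed exponent $e^{\pm 2\mu_i t}$ in the NUCC definition and the absolute-value exponent $e^{\pm 2\nu_i|t|}$ in the NUCO definition; reconciling them demands enlarging the exponential rates so that one-sided and two-sided bounds become mutually absorbable on each half-line. Once this bookkeeping step is handled cleanly, the rest of the argument is a mechanical consequence of the identities in Proposition~\ref{adjuntoydual}.
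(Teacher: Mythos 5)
Your overall route is the paper's route: Theorem \ref{dualNU} is presented there precisely as a consequence of the Gramian identities of Proposition \ref{adjuntoydual}, and your substitution $t=-s-\sigma$ together with the absorption of the factors $e^{\pm 2\nu_i\sigma}$ into new $\sigma$-dependent gains is the right bridge. The gap is in the step you describe as ``choosing the dual NUCC rates large enough to dominate $\nu_0,\nu_1$ on both half-lines,'' and in the claim that the two directions are ``perfectly symmetric.'' Definition \ref{DEFNUCC} uses \emph{signed} exponents, so for $s<0$ the target inequalities read $e^{2\tilde{\mu}_0|s|}\beta_0(\sigma)I\leq K^{d}(s,s+\sigma)\leq e^{-2\tilde{\mu}_1|s|}\beta_1(\sigma)I$: the required lower bound \emph{grows} in $|s|$ and the required upper bound \emph{decays}, whereas what you import from \eqref{Mobserv} behaves in exactly the opposite way ($e^{-2\nu_0|s|}$ below, $e^{2\nu_1|s|}$ above). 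Enlarging $\tilde{\mu}_0,\tilde{\mu}_1$ makes both targets strictly \emph{harder} on that half-line, so no choice of rates alone closes the argument; for instance, deducing the upper bound from your estimate would force $e^{2(\nu_1+\tilde{\mu}_1)|s|}\leq \beta_1(\sigma)/\tilde{\vartheta}_1(\sigma)$, which fails for fixed $\sigma$ as $|s|\to\infty$. Any repair must invoke the freedoms your sketch leaves unused: window-inflated gains such as $\beta_1(\sigma)=e^{2(\nu_1+\tilde{\mu}_1)\sigma}\tilde{\vartheta}_1(\sigma)$ \emph{together with} thresholds $\sigma_0^{d}(s)\geq|s|$, so that $|s|\leq\sigma$ holds on the whole admissible range; this has to be carried out for all four inequalities in \eqref{alpha0alpha1}--\eqref{K}. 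Note the asymmetry: the converse direction (NUCC of \eqref{dual1a} $\Rightarrow$ NUCO of \eqref{control}) really is mechanical, because absolute-value bounds are weaker than signed ones on the problematic half-line; the direction you treated first is the one carrying all the difficulty.

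A second, related gap is the threshold bookkeeping. You say $\sigma_0^{d}(s)$ is ``inherited from $\sigma_0(-s-\sigma)$,'' but after the substitution the original time $t=-s-\sigma$ moves with the window length: to invoke \eqref{Mobserv}--\eqref{Mobserv2} for every $\sigma\geq\sigma_0^{d}(s)$ you need the self-referential condition $\sigma\geq\sigma_0(-s-\sigma)$, which can fail for \emph{all} large $\sigma$ if $\sigma_0(\cdot)$ grows superlinearly (take $\sigma_0(t)=e^{|t|}$, so that $\sigma_0(-s-\sigma)\approx e^{|s|+\sigma}>\sigma$). The inheritance therefore requires either an argument that the NUCO threshold may be normalized to at most linear growth in $|t|$, or an explicit resolution of this implicit inequality consistent with the choice $\sigma_0^{d}(s)\geq|s|$ needed above; as written, this step can fail.
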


Based on the previous results, thanks to the dual relationship between non-uniform complete controllability and non-uniform complete observability, mixed with the non-uniform Kalman condition, the following results are guaranteed (see \cite{HM,HMR}).

\begin{theorem}
\label{p2l3obsNOUNIFORME}
    Any two of the properties \eqref{crec-acot}, \eqref{Mobserv} and \eqref{Mobserv2} imply the third one.
\end{theorem}

\begin{theorem}
\label{p2l3}
Any two of the properties \eqref{crec-acot}, \eqref{alpha0alpha1} and \eqref{K}
imply the third.
\end{theorem}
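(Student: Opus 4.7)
The plan is to establish the three pairwise implications separately, with the conjugation identities \eqref{GP} playing the central role. These identities express each of the controllability Gramians $W(t,t+\sigma)$ and $K(t,t+\sigma)$ as the conjugate of the other by the transition matrix. Combined with the elementary fact that $\lambda_{\min}(R)\|v\|^{2}\leq v^{T}Rv\leq\lambda_{\max}(R)\|v\|^{2}$ for symmetric $R\geq 0$, this will let me trade Gramian bounds for transition-matrix bounds and vice versa.

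For the two easy implications I would start from $K(t,t+\sigma)=\Phi_{A}(t+\sigma,t)W(t,t+\sigma)\Phi_{A}^{T}(t+\sigma,t)$. Assuming \eqref{crec-acot} and \eqref{alpha0alpha1}, submultiplicativity directly gives the upper bound $\|K(t,t+\sigma)\|\leq\|\Phi_{A}(t+\sigma,t)\|^{2}\|W(t,t+\sigma)\|\leq e^{2(\nu+\mu_{1})t}\alpha(\sigma)^{2}\alpha_{1}(\sigma)$. For the lower bound, set $w=\Phi_{A}^{T}(t+\sigma,t)v$ and use $v^{T}Kv=w^{T}Ww\geq\lambda_{\min}(W)\|w\|^{2}$ together with $\|w\|\geq\|v\|/\|\Phi_{A}(t,t+\sigma)\|$, obtained from $(\Phi_{A}^{T}(t+\sigma,t))^{-1}=\Phi_{A}^{T}(t,t+\sigma)$. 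Applying \eqref{crec-acot} once more produces a lower bound on $K(t,t+\sigma)$ of the required exponential form. The twin implication from \eqref{crec-acot} plus \eqref{K} to \eqref{alpha0alpha1} follows verbatim using the symmetric identity in \eqref{GP}.

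The more delicate implication is that \eqref{alpha0alpha1} together with \eqref{K} forces \eqref{crec-acot}. Inverting the previous strategy, the same identity yields
\[
\|\Phi_{A}(t+\sigma,t)\|^{2}\leq\frac{\lambda_{\max}(K(t,t+\sigma))}{\lambda_{\min}(W(t,t+\sigma))}\leq e^{2(\mu_{0}+\tilde{\mu}_{1})t}\,\frac{\beta_{1}(\sigma)}{\alpha_{0}(\sigma)},
\]
which is exactly a bound of the shape $e^{\nu t}\alpha(\sigma)$ with initial time $t$. The companion identity in \eqref{GP} then gives the matching bound for $\|\Phi_{A}(t,t+\sigma)\|$, this time with initial time $t+\sigma$.

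The main obstacle is that these bounds only cover pairs $(t,\tau)$ with $|t-\tau|\geq\sigma_{0}(\min\{t,\tau\})$, whereas the non-uniform Kalman property \eqref{crec-acot} must hold for every $t,\tau\in\mathbb{R}$. I would bridge the small-$|t-\tau|$ regime via the cocycle identity $\Phi_{A}(t,\tau)=\Phi_{A}(t,s)\Phi_{A}(s,\tau)$, choosing an intermediate time $s$ whose distance from both $t$ and $\tau$ exceeds the relevant $\sigma_{0}$; the extra factors produced along the way depend only on $|t-\tau|$ and can be absorbed into the function $\alpha(\cdot)\in\mathcal{B}$. Controlling the $t$-dependence of $\sigma_{0}(t)$ so that the composed bound still fits the template $e^{\nu\tau}\alpha(|t-\tau|)$ is the point that requires the most care; the argument mirrors the one used for the observability counterpart Theorem \ref{p2l3obsNOUNIFORME}, and alternatively one can transfer that theorem directly to \eqref{control} through the duality provided by Theorems \ref{adjuntoNU} and \ref{dualNU}.
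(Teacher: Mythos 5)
Your first two implications are sound and match the standard argument: the conjugation identities \eqref{GP} plus $\|\Phi_{A}(t+\sigma,t)\|\leq e^{\nu t}\alpha(\sigma)$ and $\|\Phi_{A}(t,t+\sigma)\|\leq e^{\nu(t+\sigma)}\alpha(\sigma)$ do convert either Gramian bound into the other, with the extra factors $e^{\pm 2\nu t}$ and $e^{-2\nu\sigma}$ harmlessly absorbed into the exponents $\tilde{\mu}_{i}$ (resp. $\mu_{i}$) and the functions $\beta_{i}$ (resp. $\alpha_{i}$). The long-separation half of the third implication is also correct. The genuine gap is exactly at the point you flag and defer: pairs with $|t-\tau|<\sigma_{0}(\min\{t,\tau\})$. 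Your bridge $\Phi_{A}(t,\tau)=\Phi_{A}(t,s)\Phi_{A}(s,\tau)$ requires $s-t\geq\sigma_{0}(t)$ and $s-\tau\geq\sigma_{0}(\tau)$, so the resulting bound contains the factor $\sqrt{\alpha_{1}(\Sigma)\beta_{1}(\Sigma+|t-\tau|)/\bigl(\beta_{0}(\Sigma)\alpha_{0}(\Sigma+|t-\tau|)\bigr)}$ with $\Sigma\geq\sigma_{0}(t)$. Since Definition \ref{DEFNUCC} places no control whatsoever on the function $t\mapsto\sigma_{0}(t)$ (it may grow arbitrarily fast) nor on the four functions $\alpha_{i},\beta_{i}$ beyond positivity, this factor depends on $t$ in an unbounded way for fixed $|t-\tau|$ and cannot be absorbed into the template $e^{\nu\tau}\alpha(|t-\tau|)$ with $\alpha\in\mathcal{B}$. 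A related untreated point occurs already in the long-separation regime: $\alpha(\sigma)=\sqrt{\beta_{1}(\sigma)/\alpha_{0}(\sigma)}$ need not map bounded sets into bounded sets, so membership in $\mathcal{B}$ has to be argued, not asserted. Note also that the classical repair for Proposition \ref{Prop2} (splitting the Gramian over a shifted window, $W(t,t+\theta+\sigma)\geq\Phi_{A}(t,t+\theta)W(t+\theta,t+\theta+\sigma)\Phi_{A}^{T}(t,t+\theta)$, and its $K$-analogue) does not by itself cure this in the non-uniform setting, because the window length must again exceed $\sigma_{0}(t)\vee\sigma_{0}(t+\theta)$, reintroducing the same $t$-dependence; in the uniform case this issue is invisible precisely because $\sigma$ is a single fixed constant.

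Against the paper: the paper contains no internal proof of this theorem at all — it is imported from the literature (Theorem 25 of the cited reference for NUCC), and within the paper's own logic it sits as the controllability twin of Theorem \ref{p2l3obsNOUNIFORME}, related through Proposition \ref{adjuntoydual} and Theorems \ref{adjuntoNU}--\ref{dualNU}. So your fallback — transferring Theorem \ref{p2l3obsNOUNIFORME} by duality, observing that \eqref{control1a} is the adjoint of the pair $(-A^{T},-B^{T})$ whose observability Gramians are precisely $W$ and $K$ — is in fact the paper's own treatment and is the only part of your proposal that closes as written; it should be promoted from alternative to main argument. Even there, one bookkeeping caveat deserves a sentence: under transposition $\Phi_{-A^{T}}(t,\tau)=\Phi_{A}^{T}(\tau,t)$, the prefactor in \eqref{crec-acot} migrates from the second time argument to the first, so the Kalman condition for the system and for its adjoint are not literally the same inequality; the paper glosses this (its Remark \ref{Kalmanproperty} already suggests the intended symmetric $e^{\nu|\tau|}$ reading), but a self-contained proof must fix one convention and track the swap.
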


The non-uniform complete controllable systems are a new class of time-varying linear control systems lying strictly between uniform complete controllable and complete controllable linear systems, and the same fact is true for non-uniform complete observable systems. Several illustrative examples of concrete control systems belonging to these particular classes can be found in \cite{HM,HMR}.

\section{Observability conservation by output feedback in a non-uniform context} 
 In this section we will study the invariance of the non-uniform complete observability property when we consider an output feedback. The ideas and results that we will see below are strongly based on \cite{Zhang2015ObservabilityCB}, which are adapted to the non-uniform setting of bounded growth, uniform complete observability and their respective bounds.

In order to begin, consider the control system \eqref{control}.
% \begin{subequations}
%   \begin{empheq}[left=\empheqlbrace]{align}
%     & \dot{x}(t)=A(t)x(t)+B(t)u(t), \label{LTVsystem1a} \\
%     & y(t)=C(t)x(t), \label{LTVsystem1b}
%   \end{empheq}
% \end{subequations}
% where $A(t)\in M_{n\times n}(\mathbb{R})$, $B(t)\in M_{n\times p}(\mathbb{R})$ and $C(t)\in M_{m\times n}(\mathbb{R})$. 
Moreover, by using an output feedback $u(t)=-F(t)y(t)$, for some matrix $F(\cdot)\in M_{p\times m}(\mathbb{R})$, and by defining $K(t):=B(t)F(t)$, then we replace this output feedback in \eqref{control} and we obtain the following two systems:

$$
\left\{\begin{array}{lcl}
\dot{x}(t)&=&A(t)x(t)-K(t)y(t),\\
y(t)&=&C(t)x(t),
\end{array}
\right.
$$
% \begin{subequations}
%   \begin{empheq}[left=\empheqlbrace]{align}
%      & \dot{x}(t)=A(t)x(t)-K(t)y(t),\\ 
%     % \label{ClosedLoopa} \\
%     & y(t)=C(t)x(t), 
%     % \label{ClosedLoopb}
%   \end{empheq}
% \end{subequations}
% \begin{equation}
% \label{ClosedLoop}
% \begin{array}{rcl}
% \dot{x}(t)&=&A(t)x(t)-L(t)y(t)\\
% y(t)&=&C(t)x(t)
% \end{array}
% \end{equation}
and
\begin{subequations}\label{controlA-KC}
  \begin{empheq}[left=\empheqlbrace]{align}
    & \dot{x}(t)=(A(t)-K(t)C(t))x(t), \label{ClosedLoop2a} \\
    & y(t)=C(t)x(t), \label{ClosedLoop2b}
  \end{empheq}
\end{subequations}
% \begin{equation}
% \label{ClosedLoop2}
% \begin{array}{rcl}
% \dot{x}(t)&=&(A(t)-L(t)C(t))x(t)\\
% y(t)&=&C(t)x(t)
% \end{array}
% \end{equation}
% which are equivalents. In this way, the system \eqref{ClosedLoop2a}-\eqref{ClosedLoop2b}
% % , and additionally \eqref{ClosedLoop2a}-\eqref{ClosedLoop2b}, 
% has the same property of observability than \eqref{LTVsystem1a}-\eqref{LTVsystem1b}.

In order to pave the way towards the main result, we will show the fo\-llo\-wing technical result, which ensures that the non-uniform bounded growth property is preserved, when we consider forward time, in a closed-loop system.

\begin{lemma}
    \label{RobustezNUBGLineal}
    Suppose that the system \eqref{lin} admits a non-uniform bounded growth with constants $K_0$, $a$, and $\varepsilon$. In addition, consider the nonlinear and nonautonomous perturbed system
    \begin{equation}
        \label{PerturbacionNoLineal}
        \dot{x}(t)=(A(t)+P(t))x(t)
    \end{equation}
    and assume that there exist $\mathcal{P}>0$, $p>0$ such that for any $t\in\mathbb{R}$:
    \begin{equation}
    \label{CotafLineal}
        \|P(t)\|\leq \mathcal{P}e^{-p|t|}.
    \end{equation}

    If the parameters $\varepsilon$ and $p$ satisfy
    \begin{equation}
        \label{CondicionparametrosNoLineal}
        p>\varepsilon
    \end{equation}
    then the system \eqref{PerturbacionNoLineal} admits non-uniform bounded growth property with cons\-tants $K_0$, $a+K_0\mathcal{P}$ and $\varepsilon$.
\end{lemma}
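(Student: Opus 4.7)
The plan is to use the variation of constants formula relating the two transition matrices, take norms, exploit the hypothesis $p > \varepsilon$ to remove the weight in $|s|$ that appears inside the resulting integral, and close the argument with a scalar Gronwall inequality.

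Denote by $\Psi(t,\tau)$ the transition matrix of \eqref{PerturbacionNoLineal}. Treating $P(t)x$ as a forcing term for the unperturbed plant \eqref{lin}, the variation of constants formula yields
\begin{equation*}
\Psi(t,\tau) = \Phi_{A}(t,\tau) + \int_{\tau}^{t} \Phi_{A}(t,s)\,P(s)\,\Psi(s,\tau)\,ds.
\end{equation*}
Taking norms and substituting both the bound \eqref{Phibound} for $\Phi_{A}$ (with $\eta$ replaced by $\varepsilon$) and the bound \eqref{CotafLineal} for $P$, for every $t\geq\tau$ one obtains
\begin{equation*}
\|\Psi(t,\tau)\| \leq K_{0}\, e^{\varepsilon|\tau|}\, e^{a(t-\tau)} + K_{0}\mathcal{P}\int_{\tau}^{t} e^{(\varepsilon-p)|s|}\, e^{a(t-s)}\,\|\Psi(s,\tau)\|\,ds.
\end{equation*}

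By the standing hypothesis \eqref{CondicionparametrosNoLineal}, one has $e^{(\varepsilon-p)|s|}\leq 1$, so the integrand's pointwise weight in $|s|$ can be discarded. Multiplying the resulting inequality by $e^{-at}$ and introducing $w(t):=e^{-at}\|\Psi(t,\tau)\|$, it becomes
\begin{equation*}
w(t) \leq K_{0}\, e^{\varepsilon|\tau|}\, e^{-a\tau} + K_{0}\mathcal{P}\int_{\tau}^{t} w(s)\,ds,
\end{equation*}
and the classical Gronwall lemma gives $w(t)\leq K_{0}\, e^{\varepsilon|\tau|}\, e^{-a\tau}\, e^{K_{0}\mathcal{P}(t-\tau)}$, that is
\begin{equation*}
\|\Psi(t,\tau)\| \leq K_{0}\, e^{\varepsilon|\tau|}\, e^{(a+K_{0}\mathcal{P})(t-\tau)},
\end{equation*}
which is exactly \eqref{Phibound} with constants $K_{0}$, $a+K_{0}\mathcal{P}$ and $\varepsilon$, in the forward time direction anticipated by the preamble of the lemma.

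The only delicate point is the interaction of the three exponents $a$, $\varepsilon$ and $p$: the non-uniform bound on $\Phi_{A}$ carries an unwanted factor $e^{\varepsilon|s|}$ inside the integral, and without the condition $p>\varepsilon$ the weight $e^{(\varepsilon-p)|s|}$ could not be bounded uniformly in $s$, so the Gronwall step would not close with a linear coefficient. Thus \eqref{CondicionparametrosNoLineal} is precisely the threshold that reduces the problem to a standard scalar linear Gronwall estimate, and it explains why the perturbed growth rate picks up exactly the increment $K_{0}\mathcal{P}$ while leaving the non-uniform parameter $\varepsilon$ unchanged.
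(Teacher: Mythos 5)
Your forward-time argument is correct and is essentially the paper's own proof of that case: the same variation-of-constants identity, the same use of $p>\varepsilon$ to absorb the weight $e^{(\varepsilon-p)|s|}\leq 1$, and the same scalar Gronwall step, yielding $\|\Psi(t,\tau)\|\leq K_0e^{\varepsilon|\tau|}e^{(a+K_0\mathcal{P})(t-\tau)}$ for $t\geq\tau$.

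However, there is a genuine gap: you prove only half of the stated conclusion. The lemma claims that \eqref{PerturbacionNoLineal} ``admits the non-uniform bounded growth property,'' and Definition~\ref{NUBGT} requires the estimate $\|\tilde{\Phi}(t,\tau)\|\leq K_0e^{\varepsilon|\tau|}e^{(a+K_0\mathcal{P})|t-\tau|}$ for \emph{any} $t,\tau\in\mathbb{R}$, i.e.\ with the absolute value $|t-\tau|$, not only for $t\geq\tau$. Your appeal to the ``forward time direction anticipated by the preamble'' misreads the lemma's scope: the preamble's phrase is informal, while the formal conclusion invokes Definition~\ref{NUBGT}, and the two-sided bound is genuinely needed downstream. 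Indeed, Lemma~\ref{RobustezNUBG} restates the conclusion explicitly ``for any $t_1,t_2\in\mathbb{R}$,'' and in the proof of Theorem~\ref{observableconK} the final step estimates $N_{\tilde{A}}$ through $\Phi_{\tilde{A}}(t,t+\sigma)$ (backward time) via Theorem~\ref{p2l3obsNOUNIFORME}, which presupposes the non-uniform Kalman condition in both time directions. The paper therefore carries out a second, symmetric case: for $t_2\leq t_1$ one writes
\begin{equation*}
\tilde{\Phi}(t_2,t_1)=\Phi_{A}(t_2,t_1)+\int_{t_2}^{t_1}\Phi_{A}(t_2,s)P(s)\tilde{\Phi}(s,t_1)\,ds,
\end{equation*}
bounds the integrand exactly as before (the inequality $e^{(\varepsilon-p)|s|}\leq 1$ is still what kills the weight), multiplies by $e^{-a(t_1-t_2)}$, and applies a \emph{reversed-time} Gronwall inequality (the integral runs from the variable lower limit $t_2$ up to the anchor $t_1$) to obtain $\|\tilde{\Phi}(t_2,t_1)\|\leq K_0e^{(a+K_0\mathcal{P})(t_1-t_2)+\varepsilon|t_1|}$. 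The fix is short and mirrors your own computation, but as written your proof establishes a strictly weaker statement than the lemma asserts.
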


\begin{proof}
    The transition matrix of the system \eqref{PerturbacionNoLineal}, denoted by $\tilde{\Phi}(\cdot,\cdot)$,
    is given by 
    \begin{equation} 
    \label{SolucionA-KC}
    \tilde{\Phi}(t_2,t_1)=\Phi_{A}(t_2,t_1)+\int_{t_1}^{t_2}\Phi_{A}(t_2,s)P(s)\tilde{\Phi}(s,t_1)\; ds
    \end{equation}
    and by assuming the non-uniform bounded growth property of the system \eqref{lin} and the conditions \eqref{CotafLineal}-\eqref{CondicionparametrosNoLineal}, we have the following estimate for $t_2\geq t_1$:
    $$
    \begin{array}{rcl}
    \left\|\tilde{\Phi}(t_2,t_1)\right\|
    &\leq&\displaystyle K_0e^{a(t_2-t_1)+\varepsilon |t_1|}+\int_{t_1}^{t_2}K_0e^{a(t_2-s)+\varepsilon |s|}\mathcal{P}e^{-p |s|}\left\|\tilde{\Phi}(s,t_1)\right\|\; ds,\\
    &\leq&\displaystyle K_0e^{a(t_2-t_1)+\varepsilon |t_1|}+\int_{t_1}^{t_2}K_0\mathcal{P}e^{a(t_2-s)}\left\|\tilde{\Phi}(s,t_1)\right\|\; ds\\
    \end{array}
    $$
    and multiplying the previous inequality by $e^{-a(t_2-t_1)}$:
    $$ 
    \begin{array}{rcl}
    e^{-a(t_2-t_1)}\left\|\tilde{\Phi}(t_2,t_1)\right\|
    % &\leq&\displaystyle K_0e^{\varepsilon |t_1|}+\int_{t_1}^{t_2}K_0\mathcal{P}e^{(\varepsilon-p)|s|}e^{-a(s-t_1)}\left\|\tilde{\Phi}(s,t_1)\right\|\; ds,\\
    &\leq&\displaystyle K_0e^{\varepsilon |t_1|}+\int_{t_1}^{t_2}K_0\mathcal{P}e^{-a(s-t_1)}\left\|\tilde{\Phi}(s,t_1)\right\|\; ds.
    \end{array}
    $$
By considering the Gronwall's inequality we obtain that:
$$\begin{array}{rcl}
e^{-a(t_2-t_1)}\left\|\tilde{\Phi}(t_2,t_1)\right\|&\leq&\displaystyle K_0e^{\varepsilon |t_1|}e^{\int_{t_1}^{t_2}K_0\mathcal{P}\; ds},\\
\left\|\tilde{\Phi}(t_2,t_1)\right\|&\leq&\displaystyle K_0e^{a(t_2-t_1)+\varepsilon |t_1|}e^{K_0\mathcal{P}(t_2-t_1)},\\
&=&K_0e^{(a+K_0\mathcal{P})(t_2-t_1)+\varepsilon|t_1|}.
\end{array}$$

On the other hand, if now we consider $t_2\leq t_1$, then \eqref{SolucionA-KC} is transformed into

\begin{equation*} 
    % \label{SolucionA-KC}
    \tilde{\Phi}(t_2,t_1)=\Phi_{A}(t_2,t_1)+\int_{t_2}^{t_1}\Phi_{A}(t_2,s)P(s)\tilde{\Phi}(s,t_1)\; ds
    \end{equation*}

    After that, by using the non-uniform bounded growth property and \eqref{CotafLineal}-\eqref{CondicionparametrosNoLineal}, then for this case we have that:

$$
    \begin{array}{rcl}
    \left\|\tilde{\Phi}(t_2,t_1)\right\|
    &\leq&\displaystyle K_0e^{a(t_1-t_2)+\varepsilon |t_1|}+\int_{t_2}^{t_1}K_0e^{a(s-t_2)+\varepsilon |s|}\mathcal{P}e^{-p |s|}\left\|\tilde{\Phi}(s,t_1)\right\|\; ds,\\
    &\leq&\displaystyle K_0e^{a(t_1-t_2)+\varepsilon |t_1|}+\int_{t_2}^{t_1}K_0\mathcal{P}e^{a(s-t_2)}\left\|\tilde{\Phi}(s,t_1)\right\|\; ds
    \end{array}
    $$
    and now multiplying the previous inequality by $e^{-a(t_1-t_2)}$:
    $$
    \begin{array}{rcl}
    e^{-a(t_1-t_2)}\left\|\tilde{\Phi}(t_2,t_1)\right\|
    % &\leq&\displaystyle K_0e^{\varepsilon |t_1|}+\int_{t_1}^{t_2}K_0\mathcal{P}e^{(\varepsilon-p)|s|}e^{-a(t_1-s)}\left\|\tilde{\Phi}(s,t_1)\right\|\; ds,\\
    &\leq&\displaystyle K_0e^{\varepsilon |t_1|}+\int_{t_1}^{t_2}K_0\mathcal{P}e^{-a(t_1-s)}\left\|\tilde{\Phi}(s,t_1)\right\|\; ds.
    \end{array}
    $$

Just like before, by using the Gronwall inequality we obtain that:
$$\begin{array}{rcl}
e^{-a(t_1-t_2)}\left\|\tilde{\Phi}(t_2,t_1)\right\|&\leq&\displaystyle K_0e^{\varepsilon |t_1|}e^{\int_{t_2}^{t_1}K_0\mathcal{P}\; ds},\\
\left\|\tilde{\Phi}(t_2,t_1)\right\|&\leq&\displaystyle K_0e^{a(t_1-t_2)+\varepsilon |t_1|}e^{K_0\mathcal{P}(t_1-t_2)},\\
&=&K_0e^{(a+K_0\mathcal{P})(t_1-t_2)+\varepsilon|t_1|},
\end{array}$$
which conclude the proof.
\end{proof}

Based on this result, in order to prove the main theorem, we establish a direct consequence that adapts to the problem addressed in this work, namely, the preservation of the non-uniform bounded growth property for a linear perturbation.
\begin{lemma}
\label{RobustezNUBG}
    Suppose that the plant of the system \eqref{lin} admits a non-uniform bounded growth with constants $K_0$, $a$, and $\varepsilon$. In addition, by considering the system \eqref{ClosedLoop2a}, suppose that there exist $\delta>0$ and $\gamma\geq0$ such that for any $z\in \mathbb{R}$ we have that:
    \begin{equation}
    \label{Kcondition}
    \|K(z)\|\leq \mathcal{K}e^{-\delta |z|},
    \end{equation}
    \begin{equation} 
    \label{Ccondition}
    \|C(z)\|\leq \mathcal{C}e^{\gamma |z|}.
    \end{equation}
    If we have the following condition about the constants
    \begin{equation}
        \label{conditionGED}
    \delta>\gamma+\varepsilon,
    \end{equation}
    then the transition matrix of system \eqref{ClosedLoop2a}, with
    \begin{equation}
        \label{ATilde}
        \tilde{A}(t)=A(t)-K(t)C(t),
    \end{equation}
     satisfies the non-uniform bounded growth property with constants $K_0$, $a+K_0\mathcal{K}\mathcal{C}$ and $\varepsilon$. In other words, for any $t_1, t_2\in\mathbb{R}$ and if $\Phi_{\tilde{A}}(\cdot,\cdot)$ is the transition matrix of \eqref{ClosedLoop2a} we have that:
     $$\|\Phi_{\tilde{A}}(t_2,t_1)\|\leq K_0e^{(a+K_0\mathcal{K}\mathcal{C})|t_2-t_1|+\varepsilon|t_1|}.$$
\end{lemma}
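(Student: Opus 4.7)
The plan is to deduce this result as an immediate corollary of Lemma \ref{RobustezNUBGLineal} by recognizing that the closed-loop dynamics $\dot{x}=(A(t)-K(t)C(t))x$ is exactly a linear perturbation of the plant \eqref{lin} by $P(t):=-K(t)C(t)$. So the whole task reduces to showing that this particular $P(\cdot)$ fits the hypotheses \eqref{CotafLineal}--\eqref{CondicionparametrosNoLineal} of Lemma \ref{RobustezNUBGLineal} with suitable constants.

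First, I would estimate the perturbation pointwise. Using submultiplicativity of the operator norm together with the bounds \eqref{Kcondition} and \eqref{Ccondition}, for every $t\in\mathbb{R}$ one has
$$\|P(t)\|=\|K(t)C(t)\|\leq \|K(t)\|\,\|C(t)\|\leq \mathcal{K}\mathcal{C}\,e^{-\delta|t|}e^{\gamma|t|}=\mathcal{K}\mathcal{C}\,e^{-(\delta-\gamma)|t|}.$$
This matches \eqref{CotafLineal} exactly if we set $\mathcal{P}:=\mathcal{K}\mathcal{C}$ and $p:=\delta-\gamma$. Note that $p>0$ is not needed as a separate hypothesis: it follows from \eqref{conditionGED} because $\varepsilon\geq 0$ forces $\delta-\gamma>\varepsilon\geq 0$.

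Next, I would verify the compatibility condition \eqref{CondicionparametrosNoLineal}. With the choices above, the inequality $p>\varepsilon$ reads $\delta-\gamma>\varepsilon$, which is precisely the standing assumption \eqref{conditionGED}. Therefore Lemma \ref{RobustezNUBGLineal} applies to the system \eqref{ClosedLoop2a}, whose coefficient matrix is $\tilde{A}(t)=A(t)+P(t)$, and it yields non-uniform bounded growth for its transition matrix $\Phi_{\tilde{A}}(\cdot,\cdot)$ with constants $K_0$, $a+K_0\mathcal{P}=a+K_0\mathcal{K}\mathcal{C}$ and $\varepsilon$, i.e. the desired inequality
$$\|\Phi_{\tilde{A}}(t_2,t_1)\|\leq K_0\,e^{(a+K_0\mathcal{K}\mathcal{C})|t_2-t_1|+\varepsilon|t_1|}\quad\text{for every }t_1,t_2\in\mathbb{R}.$$

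I do not expect any genuine obstacle here: the calculation is purely algebraic once Lemma \ref{RobustezNUBGLineal} is in place, the only substantive point being the observation that the output-injection term $K(t)C(t)$ has an overall exponential decay of rate $\delta-\gamma$, which by \eqref{conditionGED} strictly dominates the non-uniform exponent $\varepsilon$ of the plant. If anything deserves attention, it is simply making clear that the previous lemma covers both time directions $t_2\geq t_1$ and $t_2\leq t_1$, so that the final estimate is phrased with $|t_2-t_1|$ on both sides of the inequality, as stated.
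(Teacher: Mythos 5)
Your proposal is correct and follows essentially the same route as the paper: the paper's proof likewise consists of applying Lemma \ref{RobustezNUBGLineal} with $P(t)=K(t)C(t)$ (up to the sign, which you handle more carefully), taking $\mathcal{P}=\mathcal{K}\mathcal{C}$ and $p=\delta-\gamma>0$, and checking that \eqref{Kcondition}, \eqref{Ccondition} and \eqref{conditionGED} give \eqref{CotafLineal} and \eqref{CondicionparametrosNoLineal}. Your added remarks---that $p>0$ is automatic from \eqref{conditionGED} and that Lemma \ref{RobustezNUBGLineal} covers both time directions so the bound holds with $|t_2-t_1|$---are accurate elaborations of details the paper leaves implicit.
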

\begin{proof}
    By using the Lemma \ref{RobustezNUBGLineal} with $P(t)=K(t)C(t)$, we can obtain the estimates \eqref{CotafLineal} and \eqref{CondicionparametrosNoLineal} by considering \eqref{Kcondition}, \eqref{Ccondition} and \eqref{conditionGED}, where $p=\delta-\gamma>0$ and $\mathcal{P}=\mathcal{K}\mathcal{C}$.
    \end{proof}

The following theorem establishes the preservation of non-uniform complete observability by output feedback, whose proof follows the main result of \cite{Zhang2015ObservabilityCB}.

\begin{theorem}
\label{observableconK}
    Suppose that the system \eqref{lin} admits non-uniform bounded growth with constants $K_0$, $a$ and $\varepsilon$. Furthermore, if we assume conditions \eqref{Kcondition}, \eqref{Ccondition} and
 \eqref{conditionGED} 
    % \begin{equation}
    %     \label{conditionAED}
    %     \delta-\varepsilon>0
    % \end{equation}
    are satisfied, we have that the system \eqref{control} is non-uniformly completely observable if and only if the system \eqref{controlA-KC}\footnote{\eqref{controlA-KC} denotes the control system \eqref{ClosedLoop2a}-\eqref{ClosedLoop2b}} is non-uniformly completely observable.
\end{theorem}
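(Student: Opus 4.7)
The plan is to establish a two-sided equivalence between the observability Gramians $M_A$ and $M_{\tilde A}$ of the original and closed-loop plants, where $\tilde A:=A-KC$, and to deduce NUCO from it. A key preliminary is Lemma \ref{RobustezNUBG}, which guarantees that $\tilde A$ itself satisfies non-uniform bounded growth with the same exponent $\varepsilon$ and an enlarged rate $a+K_0\mathcal{K}\mathcal{C}$. By Theorem \ref{p2l3obsNOUNIFORME}, it will then suffice to transfer only the $M$-type bounds \eqref{Mobserv}: the $N$-type bounds \eqref{Mobserv2} follow automatically from the non-uniform Kalman condition shared by the two plants.

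First, I would derive two coupled Volterra identities for the outputs $y(s):=C(s)\Phi_A(s,t)x_0$ and $\tilde y(s):=C(s)\Phi_{\tilde A}(s,t)x_0$ associated with the same initial condition $x_0$ at time $t$. Applying variation of constants to $\dot{x}=Ax=\tilde Ax+K(Cx)$ around $\tilde A$, and symmetrically to $\dot{x}=\tilde Ax=Ax-K(Cx)$ around $A$, one obtains
\begin{equation*}
\tilde y(s)=y(s)-\int_t^s C(s)\Phi_A(s,\tau)K(\tau)\tilde y(\tau)\,d\tau,
\end{equation*}
\begin{equation*}
y(s)=\tilde y(s)+\int_t^s C(s)\Phi_{\tilde A}(s,\tau)K(\tau)y(\tau)\,d\tau.
\end{equation*}

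The heart of the argument will be a uniform-in-$t$ bound on the kernels $H(s,\tau):=C(s)\Phi_A(s,\tau)K(\tau)$ and $\tilde H(s,\tau):=C(s)\Phi_{\tilde A}(s,\tau)K(\tau)$ for $t\le\tau\le s\le t+\sigma$. Combining \eqref{Kcondition}, \eqref{Ccondition} with the non-uniform bounded growth estimates of $\Phi_A$ and $\Phi_{\tilde A}$ produces an exponential factor $e^{a|s-\tau|+\gamma|s|-(\delta-\varepsilon)|\tau|}$ (or the analogue with $a$ replaced by $a+K_0\mathcal{K}\mathcal{C}$). A short case analysis on the signs of $t$ and $t+\sigma$, using precisely hypothesis \eqref{conditionGED}, will show that $\gamma|s|-(\delta-\varepsilon)|\tau|\le\gamma\sigma$ in every configuration: when $t\ge 0$ both $s,\tau$ are non-negative and the negative coefficient $\gamma-(\delta-\varepsilon)$ multiplying $\tau$ absorbs the $t$-dependence; when $t+\sigma\le 0$ one has $|\tau|\ge|s|$, so the expression is non-positive; and the mixed case $t<0<t+\sigma$ splits along $\tau\le 0\le s$ into the two previous ones. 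The outcome is kernel bounds $\|H\|,\|\tilde H\|\le L(\sigma)$ with $L(\sigma)$ depending only on $\sigma$ and the fixed data. This uniform estimate is the main technical obstacle, and it is exactly where hypothesis \eqref{conditionGED} is essential.

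With the uniform kernel bounds in hand, Gronwall's inequality applied to each Volterra equation, followed by a Cauchy--Schwarz step to pass to $L^2$, will yield two-sided estimates
\begin{equation*}
c_1(\sigma)\int_t^{t+\sigma}|y(s)|^2\,ds\le\int_t^{t+\sigma}|\tilde y(s)|^2\,ds\le c_2(\sigma)\int_t^{t+\sigma}|y(s)|^2\,ds,
\end{equation*}
with positive constants depending only on $\sigma$. In quadratic-form notation this reads $c_1(\sigma)\,M_A(t,t+\sigma)\le M_{\tilde A}(t,t+\sigma)\le c_2(\sigma)\,M_A(t,t+\sigma)$. Substituting the NUCO bounds \eqref{Mobserv} for $M_A$ then produces the same bounds for $M_{\tilde A}$ with identical exponents $\nu_0,\nu_1$ and updated functions $\vartheta_0,\vartheta_1$, and Theorem \ref{p2l3obsNOUNIFORME} supplies \eqref{Mobserv2} for $\tilde A$. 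The converse implication follows by running the same argument with the roles of $y$ and $\tilde y$ exchanged, which is why both Volterra identities were recorded.
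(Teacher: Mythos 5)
Your proposal is correct, and it reaches the conclusion by a genuinely different route than the paper. The paper works at the level of the Gramian quadratic form: it writes the single Volterra identity $\Phi_{\tilde{A}}(s,t)-\Phi_{A}(s,t)=-\int_t^s\Phi_A(s,p)K(p)C(p)\Phi_{\tilde{A}}(p,t)\,dp$, expands the squared output norm into three terms, bounds the cross term by Cauchy--Schwarz and the remainder $\Delta$ by $\varphi(\sigma)\,v^{T}M_{\tilde{A}}(t,t+\sigma)v$, and is then forced to solve a quadratic inequality in $\sqrt{v^{T}M_{\tilde{A}}v}$ with a three-way case analysis on $\varphi(\sigma)$ for the lower bound, plus a separate ``symmetric'' identity (exchanging $A$ and $\tilde{A}$, $K\mapsto -K$) for the upper bound. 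You instead record both output-level Volterra identities at once, observe that they read $y=(I+V_H)\tilde{y}$ and $\tilde{y}=(I-V_{\tilde{H}})y$ for Volterra operators whose kernels you bound uniformly in $t$ by $L(\sigma)$ --- your case analysis on the signs of $t$, $t+\sigma$ showing $\gamma|s|-(\delta-\varepsilon)|\tau|\le\gamma\sigma$ is exactly right and is where \eqref{conditionGED} enters, just as $\delta>\varepsilon$ and $\delta-\varepsilon-\gamma>0$ do in the paper's bound on $\int_t^s\|\Phi_A(s,p)\|^2\|K(p)\|^2\,dp$ --- and then a Gronwall or Schur-type operator-norm estimate gives the two-sided equivalence $c_1(\sigma)M_A(t,t+\sigma)\le M_{\tilde{A}}(t,t+\sigma)\le c_2(\sigma)M_A(t,t+\sigma)$ directly. (In fact each identity already yields one side without Gronwall, since $\|V_H\|_{L^2\to L^2}\le L(\sigma)\sigma$.) This buys two things: it eliminates the paper's quadratic case analysis entirely, and it preserves the non-uniform exponents exactly ($\nu_0,\nu_1$ unchanged), whereas the paper's lower bound degrades to $e^{-(4\nu_0+2\nu_1)|t|}$; the paper's method, on the other hand, needs only the weaker termwise estimates and stays closer to the classical argument of Zhang--Zhang. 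Both routes share the same scaffolding at the boundaries: Lemma \ref{RobustezNUBG} to give $\tilde{A}$ non-uniform bounded growth with constants $K_0$, $a+K_0\mathcal{K}\mathcal{C}$, $\varepsilon$, the symmetry $A(t)=\tilde{A}(t)-(-K(t))C(t)$ for the converse implication, and Theorem \ref{p2l3obsNOUNIFORME} to upgrade the $M$-bounds \eqref{Mobserv} to the $N$-bounds \eqref{Mobserv2}, so your closing steps coincide with the paper's.
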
 

\begin{remark}
    Before proceeding with the proof of the theorem, we can note that there exists a kind of ``symmetry" between the matrices $A(t)$ and $\tilde{A}(t)$ described in \eqref{ATilde},    
due to the fact that it can be rewritten as follows
    \begin{equation*}
        % \label{AbasadoAtilde}
        A(t)=\tilde{A}(t)-(-K(t))C(t).
    \end{equation*}
    Based on the above, the implication from right to left in Theorem \textnormal{\ref{observableconK}} can be proved based on the implication from left to right.
\end{remark}
\begin{proof}
    By considering the parameter $(s,t)$ instead of $(t_2,t_1)$ in \eqref{SolucionA-KC}, and consequently $s\geq t$, we have that
    $$\Phi_{\tilde{A}}(s,t)-\Phi_{A}(s,t)=-\displaystyle\int_{t}^{s}\Phi_{A}(s,p)K(p)C(p)\Phi_{\tilde{A}}(p,t)\; dp.$$

    Multiplying from the left on both sides of the previous equality by $C(s)$, multiplying from the right by any unit vector $v\in\mathbb{R}^{n}$ and by taking the integral of the squared norm with respect to $s$ on both sides, then we have the following expression:
    $$\begin{array}{rcl}
    &&\displaystyle \int_{t}^{t+\sigma}\left \| C(s)(\Phi_{\tilde{A}}(s,t)-\Phi_{A}(s,t))v\right \|^{2}ds\\
    &=&\displaystyle \int_{t}^{t+\sigma}\left \| C(s)\int_{t}^{s}\Phi_{A}(s,p)K(p)C(p)\Phi_{\tilde{A}}(p,t)v\;dp\right \|^{2}ds.
    \end{array}
    $$
    
    By developing the squared Euclidean norm at the left hand side and by reordering the terms, we have that
    \begin{equation}
    \label{igualdadinicial}
    \begin{array}{rcl}
    &&\displaystyle \int_{t}^{t+\sigma}\left \| C(s)\Phi_{A}(s,t)v\right \|^{2}ds
    =-\displaystyle \int_{t}^{t+\sigma}\left \| C(s)\Phi_{\tilde{A}}(s,t)v\right \|^{2}ds\\
    &+&\displaystyle\int_{t}^{t+\sigma}2v^{T}\Phi_{A}^{T}(s,t)C^{T}(s)C(s)\Phi_{\tilde{A}}(s,t)v\;ds\\
    &+& \displaystyle \int_{t}^{t+\sigma}\left \| C(s)\int_{t}^{s}\Phi_{A}(s,p)K(p)C(p)\Phi_{\tilde{A}}(p,t)vdp\right \|^{2} ds.
    \end{array}\end{equation}

    Now, each terms described in the previous equality will be properly bounded in order to achieve the non-uniform complete observability of the system \eqref{controlA-KC}. 

    Firstly, if the system \eqref{control} is non-uniformly completely observable, from \eqref{gramianobservability} we have that for any unitary vector $v\in\mathbb{R}^{n}$:
    \begin{equation} 
    \label{MTeorema}
    \begin{array}{rcl}
    \vartheta_0(\sigma)e^{-2\nu_0 |t|}&\leq& v^{T}M_{A}(t,t+\sigma)v\\
&=&\displaystyle\int_{t}^{t+\sigma}\left \|C(s)\Phi_{A}(s,t)v \right \|^{2}ds,
    \leq 
    %v^{T}\vartheta_1(\sigma)e^{2\nu_1 |t|}v
    \vartheta_1(\sigma)e^{2\nu_1 |t|}.
    \end{array}
    \end{equation}
From this point forward, we define the rest of terms that appear in \eqref{igualdadinicial} to facilitate the writing of the rest of the proof:
    \begin{equation} 
    \label{lambda}
    v^{T}M_{\tilde{A}}(t,t+\sigma)v=\displaystyle\int_{t}^{t+\sigma}\left\|C(s)\Phi_{\tilde{A}}(s,t)v \right \|^{2}ds,
    % =v^{T}M(t,t+\sigma)v.
    \end{equation}
    \begin{equation}
        \label{lambda_2}
\Lambda(t,t+\sigma)=\displaystyle\int_{t}^{t+\sigma}2v^{T}\Phi_{A}^{T}(s,t)C^{T}(s)C(s)\Phi_{\tilde{A}}(s,t)v\;ds,
    \end{equation}
    \begin{equation}
        \label{Delta}
        \Delta(t,t+\sigma)=\displaystyle \int_{t}^{t+\sigma}\left \| C(s)\int_{t}^{s}\Phi_{A}(s,p)K(p)C(p)\Phi_{\tilde{A}}(p,t)vdp\right \|^{2} ds.
    \end{equation}
    
    On the other hand, by using the Cauchy-Schwarz inequality and by considering \eqref{MTeorema} and \eqref{lambda}, then \eqref{lambda_2} can be bounded as follows:
    \begin{equation} \label{CSdesigualdad}
    \begin{array}{rcl}&&\Lambda(t,t+\sigma)\\
    &\leq& \displaystyle 2\sqrt{\int_{t}^{t+\sigma}\left\|C(s)\Phi_{A}(s,t)v \right \|^{2}ds\int_{t}^{t+\sigma}\left\|C(s)\Phi_{\tilde{A}}(s,t)v \right \|^{2}ds},\\
    &\leq& 2\sqrt{\vartheta_1(\sigma)e^{2\nu_1 |t|}}\sqrt{v^{T}M_{\tilde{A}}(t,t+\sigma)v}.
    \end{array}
    \end{equation}

Therefore, by mixing the previous estimates, Cauchy-Schwarz inequality and by considering condition \eqref{Ccondition}, we have that:
\begin{equation}
\label{acotandoDelta}
\begin{array}{rcl}
&&\Delta(t,t+\sigma)\\
&\leq&\displaystyle\int_{t}^{t+\sigma}\|C(s)\|^{2}\left\|\int_{t}^{s}\Phi_{A}(s,p) K(p) C(p)\Phi_{\tilde{A}}(p,t)vdp \right\|^{2}ds,\\
&\leq&\displaystyle\int_{t}^{t+\sigma}\mathcal{C}^{2}e^{2\gamma |s|}\left(\int_{t}^{s}\left\|\Phi_{A}(s,p) \right \|^{2}\left \|K(p) \right \|^{2}dp \right)\left (\int_{t}^{s}\left\| C(p)\Phi_{\tilde{A}}(p,t)v\right\|^{2}dp \right)ds.\\
% &\leq&v^{T}M_{\tilde{A}}(t,t+\sigma)v\displaystyle\int_{t}^{t+\sigma}\frac{K_0\mathcal{K}\mathcal{C}^{2}}{2(a+\delta-\varepsilon)}e^{-2(\delta-\varepsilon-\gamma)|s|}e^{2(a+\delta-\varepsilon)(s-t)}ds,\\
% &\leq&\displaystyle v^{T}M_{\tilde{A}}(t,t+\sigma)v\frac{K_0\mathcal{K}\mathcal{C}^{2}e^{2(a+\delta-\varepsilon)\sigma}}{4(a+\delta-\varepsilon)^2}.
\end{array}
\end{equation}

After that, by using non-uniform bounded growth property, \eqref{Kcondition} and \eqref{conditionGED} ($\delta>\varepsilon$), we can bound the central factor of \eqref{acotandoDelta}:
$$\begin{array}{rcl}
&&\displaystyle \int _{t}^{s}\left\| \Phi_{A}(s,p)\right \|^{2}\left\| K(p)\right\|^{2}dp\leq \displaystyle\int_{t}^{s}K_0e^{2a(s-p)+2\varepsilon |p|}\mathcal{K}e^{-2\delta |p|}dp,\\
&=&\displaystyle\int_{t}^{s}K_0\mathcal{K}e^{2a(s-p)}e^{-2(\delta-\varepsilon) |p|}dp,
\leq\displaystyle\int_{t}^{s}K_0\mathcal{K}e^{2(a+\delta-\varepsilon)(s-p)}e^{-2(\delta-\varepsilon) |s|}dp,\\
&=&\displaystyle \frac{K_0\mathcal{K}}{2(a+\delta-\varepsilon)}e^{-(\delta-\varepsilon)|s|}\left [-1+ e^{2(a+\delta-\varepsilon)(s-t)}\right ],\\
&\leq&\displaystyle \frac{K_0\mathcal{K}}{2(a+\delta-\varepsilon)}e^{-2(\delta-\varepsilon)|s|}e^{2(a+\delta-\varepsilon)(s-t)}
\end{array}$$
and by using this estimate in \eqref{acotandoDelta} we obtain:

$$\begin{array}{rcl}
\Delta(t,t+\sigma)
% &\leq&\displaystyle\int_{t}^{t+\sigma}\|C(s)\|^{2}\left\|\int_{t}^{s}\Phi_{A}(s,p) K(p) C(p)\Phi_{\tilde{A}}(p,t)vdp \right\|^{2}ds,\\
% &\leq&\displaystyle\int_{t}^{t+\sigma}\mathcal{C}^{2}e^{2\gamma |s|}\left(\int_{t}^{s}\left\|\Phi_{A}(s,p) \right \|^{2}\left \|K(p) \right \|^{2}dp \right)\left (\int_{t}^{s}\left\| C(p)\Phi_{\tilde{A}}(p,t)v\right\|^{2}dp \right)ds,\\
&\leq&v^{T}M_{\tilde{A}}(t,t+\sigma)v\displaystyle\int_{t}^{t+\sigma}\frac{K_0\mathcal{K}\mathcal{C}^{2}}{2(a+\delta-\varepsilon)}e^{-2(\delta-\varepsilon-\gamma)|s|}e^{2(a+\delta-\varepsilon)(s-t)}ds,\\
&\leq&\displaystyle v^{T}M_{\tilde{A}}(t,t+\sigma)v\frac{K_0\mathcal{K}\mathcal{C}^{2}e^{2(a+\delta-\varepsilon)\sigma}}{4(a+\delta-\varepsilon)^2}.
\end{array}$$

Based on the previous estimations, the equality 
\eqref{igualdadinicial} is expressed in terms of the following inequality:

\begin{equation}
\label{desigualdadlambda}
\begin{array}{rcl}
&&\vartheta_0(\sigma)e^{-2\nu_0 |t|}\\
&\leq& -v^{T}M_{\tilde{A}}(t,t+\sigma)v+2\sqrt{\vartheta_1(\sigma)e^{2\nu_1 |t|}}\sqrt{v^{T}M_{\tilde{A}}(t,t+\sigma)v}\\
&+&\varphi(\sigma)v^{T}M_{\tilde{A}}(t,t+\sigma)v,\\
&=&(\varphi(\sigma)-1)v^{T}M_{\tilde{A}}(t,t+\sigma)v\\
&+&2\sqrt{\vartheta_1(\sigma)e^{2\nu_1 |t|}}\sqrt{v^{T}M_{\tilde{A}}(t,t+\sigma)v},
\end{array}
\end{equation}
where 
$$\varphi(\sigma)=\frac{K_0\mathcal{K}\mathcal{C}^{2}e^{2(a+\delta-\varepsilon)\sigma}}{4(a+\delta-\varepsilon)^2}.
$$

Based on the above, we will analyze three cases: 
\begin{itemize}
    \item [a)] If $\varphi(\sigma)=1$, then from \eqref{desigualdadlambda} we have that:
$$
\begin{array}{rcl}
\vartheta_0(\sigma)e^{-2\nu_0 |t|}&\leq& 2\sqrt{\vartheta_1(\sigma)e^{2\nu_1 |t|}}\sqrt{v^{T}M_{\tilde{A}}(t,t+\sigma)v},\\
\displaystyle\frac{\vartheta^{2}_0(\sigma)}{4\vartheta_1(\sigma)}e^{-(4\nu_0+2\nu_1)|t|}&\leq& v^{T}M_{\tilde{A}}(t,t+\sigma)v.
\end{array}
$$

\item[b)] If $\varphi(\sigma)>1$, then we rewrite \eqref{desigualdadlambda} as follows:

\begin{equation}
\label{eq:cuadratica}
\begin{array}{rcl}
&&(\varphi(\sigma)-1)v^{T}M_{\tilde{A}}(t,t+\sigma)v\\
&+&2\sqrt{\vartheta_1(\sigma)e^{2\nu_1 |t|}}\sqrt{v^{T}M_{\tilde{A}}(t,t+\sigma)v}-\vartheta_0(\sigma)e^{-2\nu_0 |t|}\geq0,
\end{array}
\end{equation}
and in order to solve this inequality, we rationalize and develop the rest by considering $\varphi(\sigma)-1\neq0$:
\begin{equation}
\label{eq:solucioncuadratica}
\begin{array}{rcl}
\displaystyle\sqrt{v^{T}M_{\tilde{A}}(t,t+\sigma)v}&\geq&\displaystyle\frac{-\sqrt{\vartheta_1(\sigma)e^{2\nu_1 |t|}}+\sqrt{\vartheta_1(\sigma)e^{2\nu_1 |t|}+(\varphi(\sigma)-1)\vartheta_0(\sigma)e^{-2\nu_0 |t|}}}{\varphi(\sigma)-1},\\
\displaystyle \sqrt{v^{T}M_{\tilde{A}}(t,t+\sigma)v}&\geq&\displaystyle\frac{\vartheta_0(\sigma)e^{-2\nu_0 |t|}}{\left(\sqrt{\vartheta_1(\sigma)e^{2\nu_1 |t|}}+\sqrt{\vartheta_1(\sigma)e^{2\nu_1 |t|}+(\varphi(\sigma)-1)\vartheta_0(\sigma)e^{-2\nu_0 |t|}}\right)},\\
\displaystyle v^{T}M_{\tilde{A}}(t,t+\sigma)v&\geq&\displaystyle\left(\frac{\vartheta_0(\sigma)e^{-2\nu_0 |t|}}{\left(\sqrt{\vartheta_1(\sigma)e^{2\nu_1 |t|}}+\sqrt{\vartheta_1(\sigma)e^{2\nu_1 |t|}+(\varphi(\sigma)-1)\vartheta_0(\sigma)e^{-2\nu_0 |t|}}\right)}\right)^{2},\\
% \displaystyle v^{T}M_{\tilde{A}}(t,t+\sigma)v&\geq&\displaystyle\left(\frac{\vartheta_0(\sigma)e^{-2\nu_0 |t|}}{\sqrt{\vartheta_1(\sigma)e^{2\nu_1 |t|}}+\sqrt{\vartheta_1(\sigma)e^{2\nu_1 |t|}+(\varphi(\sigma)-1)\vartheta_0(\sigma)e^{-2\nu_0 |t|}}}\right)^{2}.
\end{array}
\end{equation}
which implies that
    $$\begin{array}{rcl}
     v^{T}M_{\tilde{A}}(t,t+\sigma)v&\geq&\displaystyle \left(\frac{\vartheta_0(\sigma)e^{-2\nu_0 |t|}}{2\sqrt{\max\{\vartheta_1(\sigma),(\varphi(\sigma)-1)\vartheta_0(\sigma)\}e^{2\nu_1 |t|}}}\right)^{2},\\
v^{T}M_{\tilde{A}}(t,t+\sigma)v&\geq&\displaystyle\frac{\vartheta_0^{2}(\sigma)}{4\max\{\vartheta_1(\sigma),(\varphi(\sigma)-1)\vartheta_0(\sigma)\}}e^{-(4\nu_0+2\nu_1)|t|}.
\end{array}$$
    \item[c)] If $\varphi(\sigma)-1<0$, then we can deduce that based on inequality \eqref{eq:cuadratica}, we obtain the same inequality \eqref{eq:solucioncuadratica}. After that, we have that:
    $$
    \begin{array}{rcl}
    \displaystyle\left ( \frac{\vartheta_0(\sigma)e^{-2\nu_0 |t|}}{2\sqrt{\vartheta_1(\sigma)e^{2\nu_1 |t|}}}\right )^{2}&\leq& v^{T}M_{\tilde{A}}(t,t+\sigma)v,\\
    \displaystyle\frac{\vartheta^{2}_0(\sigma)}{4\vartheta_1(\sigma)}e^{-(4\nu_0+2\nu_1)|t|}&\leq& v^{T}M_{\tilde{A}}(t,t+\sigma)v.
    \end{array}
    $$
\end{itemize}

Now we must establish the upper bound for $v^{T}M_{\tilde{A}}(t,t+\sigma)v$, which will be obtained from a similar argument to the one used to get the lower bound. Based on this kind of symmetry existing between $A(t)$ and $\tilde{A}(t)$, the ``sy\-mme\-tric'' counterpart of (\ref{igualdadinicial}) corresponds to exchange $\Phi_{A}(\cdot,\cdot)$ with $\Phi_{\tilde{A}}(\cdot,\cdot)$ and replacing $K(p)$ by $-K(p)$. In this way, we will have the following expression: 
\begin{equation*}
    % \label{igualdadinicial}
    \begin{array}{rcl}
    &&\displaystyle \int_{t}^{t+\sigma}\left \| C(s)\Phi_{\tilde{A}}(s,t)v\right \|^{2}ds=-\displaystyle \int_{t}^{t+\sigma}\left \| C(s)\Phi_{A}(s,t)v\right \|^{2}ds\\
    &+&\displaystyle\int_{t}^{t+\sigma}2v^{T}\Phi_{\tilde{A}}^{T}(s,t)C^{T}(s)C(s)\Phi_{A}(s,t)v\;ds\\
    &+& \displaystyle \int_{t}^{t+\sigma}\left \| C(s)\int_{t}^{s}\Phi_{\tilde{A}}(s,p)K(p)C(p)\Phi_{A}(p,t)v\;dp\right \|^{2} ds.
    \end{array}\end{equation*}
    
By rewriting the previous equality, we obtain that:
\begin{equation}
    \label{igualdadinicialcaso2}
    \begin{array}{rcl}
&&\displaystyle\int_{t}^{t+\sigma}\left \| C(s)\Phi_{A}(s,t)v\right \|^{2}ds
=-\displaystyle \int_{t}^{t+\sigma}\left \| C(s)\Phi_{\tilde{A}}(s,t)v\right \|^{2}ds\\
&+&\displaystyle\int_{t}^{t+\sigma}2v^{T}\Phi_{\tilde{A}}^{T}(s,t)C^{T}(s)C(s)\Phi_{A}(s,t)v\;ds\\
    &+& \displaystyle \int_{t}^{t+\sigma}\left \| C(s)\int_{t}^{s}\Phi_{\tilde{A}}(s,p)K(p)C(p)\Phi_{A}(p,t)v\;dp\right \|^{2} ds.
    \end{array}\end{equation}

The second term on the right-hand side of \eqref{igualdadinicialcaso2} will be bounded using the Cauchy-Schwarz inequality as we did in the estimate \eqref{CSdesigualdad}. If we define the third term of \eqref{igualdadinicialcaso2} as follows:
    \begin{equation}
        \label{Deltatilde}
        \tilde{\Delta}(t,t+\sigma)=\displaystyle \int_{t}^{t+\sigma}\left \| C(s)\int_{t}^{s}\Phi_{\tilde{A}}(s,p)K(p)C(p)\Phi_{A}(p,t)vdp\right \|^{2} ds,
    \end{equation}
by mixing Cauchy-Schwarz inequality and  \eqref{Ccondition}, then we have that: 
% \eqref{PhiK} and \eqref{Mobserv}, then we have that:
$$\begin{array}{rcl}
&&\tilde{\Delta}(t,t+\sigma)\\&\leq&\displaystyle \int_{t}^{t+\sigma}\|C(s)\|^{2}\left \|\int_{t}^{s}\Phi_{\tilde{A}}(s,p)K(p)C(p)\Phi_{A}(p,t)v\;dp\right \|^{2} ds,\\
&\leq&\displaystyle\int_{t}^{t+\sigma}\mathcal{C}^{2}e^{2\gamma |s|}\left(\int_{t}^{s}\left\|\Phi_{\tilde{A}}(s,p) \right \|^{2}\left \|K(p) \right \|^{2}dp \right)\left (\int_{t}^{s}\left\| C(p)\Phi_{A}(p,t)v\right\|^{2}dp \right)ds.
% &\leq&\displaystyle\int_{t}^{t+\sigma}\frac{K_0\mathcal{K}\mathcal{C}^{2}}{2(a+K_0\mathcal{K}\mathcal{C}+\delta-\varepsilon)}e^{-2(\delta-\varepsilon-\gamma)|s|}e^{2(a+K_0\mathcal{K}\mathcal{C}+\delta-\varepsilon)(s-t)}\vartheta_1(\sigma)e^{2\nu_1 |t|}ds,\\
% &\leq&\displaystyle\frac{\vartheta_1(\sigma)K_0\mathcal{K}\mathcal{C}^{2}}{4(a+K_0\mathcal{K}\mathcal{C}+\delta-\varepsilon)^{2}}e^{2(a+K_0\mathcal{K}\mathcal{C}+\delta-\varepsilon)\sigma}e^{2\nu_1 |t|}.\\
% &=&\displaystyle\frac{\vartheta_1(\sigma)K_0e^{\frac{K_0\mathcal{K}\mathcal{C}}{\gamma-(\varepsilon+\delta)}}\mathcal{K}\mathcal{C}}{4(-a+\varepsilon+\delta)(\gamma-(\varepsilon+\delta))}e^{2\nu_1 t}.
\end{array}$$

In addition, by considering Lemma \ref{RobustezNUBG}, \eqref{Kcondition} and \eqref{conditionGED} ($\delta>\varepsilon$) we have that:
\begin{equation}
\label{PhiK}
\begin{array}{rcl}
&&\displaystyle \int _{t}^{s}\left\| \Phi_{\tilde{A}}(s,p)\right \|^{2}\left\| K(p)\right\|^{2}dp\\
&\leq& \displaystyle\int_{t}^{s}K_0e^{2(a+K_0\mathcal{K}\mathcal{C})(s-p)+2\varepsilon|p|}\mathcal{K}e^{-2\delta |p|}dp,\\
&=& \displaystyle\int_{t}^{s}K_0\mathcal{K}e^{2(a+K_0\mathcal{K}\mathcal{C})(s-p)}e^{-2(\delta-\varepsilon) |p|}dp,\\
&\leq& \displaystyle\int_{t}^{s}K_0\mathcal{K}e^{2(a+K_0\mathcal{K}\mathcal{C}+\delta-\varepsilon)(s-p)}e^{-2(\delta-\varepsilon) |s|}dp,\\
&=&\displaystyle \frac{K_0\mathcal{K}}{2(a+K_0\mathcal{K}\mathcal{C}+\delta-\varepsilon)}e^{-(\delta-\varepsilon)|s|}\left [ -1+e^{2(a+K_0\mathcal{K}\mathcal{C}+\delta-\varepsilon)(s-t)}\right ],\\
&\leq&\displaystyle \frac{K_0\mathcal{K}}{2(a+K_0\mathcal{K}\mathcal{C}+\delta-\varepsilon)}e^{-2(\delta-\varepsilon)|s|}e^{2(a+K_0\mathcal{K}\mathcal{C}+\delta-\varepsilon)(s-t)}.
\end{array}
\end{equation}

Therefore, by mixing Cauchy-Schwarz inequality, \eqref{MTeorema} and \eqref{PhiK}, then we have that:
$$\begin{array}{rcl}
 &&\tilde{\Delta}(t,t+\sigma)\\&\leq&\displaystyle\int_{t}^{t+\sigma}\frac{K_0\mathcal{K}\mathcal{C}^{2}}{2(a+K_0\mathcal{K}\mathcal{C}+\delta-\varepsilon)}e^{-2(\delta-\varepsilon-\gamma)|s|}e^{2(a+K_0\mathcal{K}\mathcal{C}+\delta-\varepsilon)(s-t)}\vartheta_1(\sigma)e^{2\nu_1 |t|}ds,\\
&\leq&\displaystyle\frac{\vartheta_1(\sigma)K_0\mathcal{K}\mathcal{C}^{2}}{4(a+K_0\mathcal{K}\mathcal{C}+\delta-\varepsilon)^{2}}e^{2(a+K_0\mathcal{K}\mathcal{C}+\delta-\varepsilon)\sigma}e^{2\nu_1 |t|}.
 % &\leq&\displaystyle \int_{t}^{t+\sigma}\left \| C(s)\int_{t}^{s}\Phi_{\tilde{A}}(s,p)K(p)C(p)\Phi_{A}(p,t)v\;dp\right \|^{2} ds\\
% &\leq&\displaystyle\int_{t}^{t+\sigma}\mathcal{C}^{2}e^{2\gamma |s|}\left(\int_{t}^{s}\left\|\Phi_{\tilde{A}}(s,p) \right \|^{2}\left \|K(p) \right \|^{2}dp \right)\left (\int_{t}^{s}\left\| C(p)\Phi_{A}(p,t)v\right\|^{2}dp \right)ds,\\
% &\leq&\displaystyle\int_{t}^{t+\sigma}\frac{K_0\mathcal{K}\mathcal{C}^{2}}{2(a+K_0\mathcal{K}\mathcal{C}+\delta-\varepsilon)}e^{-2(\delta-\varepsilon-\gamma)|s|}e^{2(a+K_0\mathcal{K}\mathcal{C}+\delta-\varepsilon)(s-t)}\vartheta_1(\sigma)e^{2\nu_1 |t|}ds,\\
% &\leq&\displaystyle\frac{\vartheta_1(\sigma)K_0\mathcal{K}\mathcal{C}^{2}}{4(a+K_0\mathcal{K}\mathcal{C}+\delta-\varepsilon)^{2}}e^{2(a+K_0\mathcal{K}\mathcal{C}+\delta-\varepsilon)\sigma}e^{2\nu_1 |t|}.\\
% % &=&\displaystyle\frac{\vartheta_1(\sigma)K_0e^{\frac{K_0\mathcal{K}\mathcal{C}}{\gamma-(\varepsilon+\delta)}}\mathcal{K}\mathcal{C}}{4(-a+\varepsilon+\delta)(\gamma-(\varepsilon+\delta))}e^{2\nu_1 t}.
\end{array}$$

On this way, \eqref{igualdadinicialcaso2} can be expressed, by considering the respective bounds of the terms, as follows:
$$\vartheta_0(\sigma)e^{-2\nu_0 |t|}\leq -v^{T}M_{\tilde{A}}(t,t+\sigma)v+2\sqrt{\vartheta_1(\sigma)e^{2\nu_1 |t|}}\sqrt{v^{T}M_{\tilde{A}}(t,t+\sigma)v}+\psi(\sigma) e^{2\nu_1 |t|}$$
where 
$$\psi(\sigma)=\displaystyle\frac{\vartheta_1(\sigma)K_0\mathcal{K}\mathcal{C}^{2}e^{2(a+K_0\mathcal{K}\mathcal{C}+\delta-\varepsilon)\sigma}}{4(a+K_0\mathcal{K}\mathcal{C}+\delta-\varepsilon)^{2}},$$
then we have that
$$\begin{array}{rcl}
     \sqrt{v^{T}M_{\tilde{A}}(t,t+\sigma)v}&\leq&\sqrt{\vartheta_1(\sigma)e^{2\nu_1 |t|}}+\sqrt{\vartheta_1(\sigma)e^{2\nu_1 |t|}+(\psi(\sigma) e^{2\nu_1 |t|}-\vartheta_0(\sigma)e^{-2\nu_0 |t|})},  \\
     \sqrt{v^{T}M_{\tilde{A}}(t,t+\sigma)v}&\leq&2 \sqrt{(\vartheta_1(\sigma)+\psi(\sigma)) e^{2\nu_1 |t|}},\\
     v^{T}M_{\tilde{A}}(t,t+\sigma)v&\leq&4(\vartheta_1(\sigma)+\psi(\sigma)) e^{2\nu_1 |t|}.
\end{array}$$

In summary, the following expressions correspond to the estimates of $v^{T}M_{\tilde{A}}(t,t+\sigma)v$: If $\varphi(\sigma)-1\leq0$ we have that
$$\displaystyle\frac{\vartheta^{2}_0(\sigma)}{4\vartheta_1(\sigma)}e^{-(4\nu_0+2\nu_1)|t|}\leq v^{T}M_{\tilde{A}}(t,t+\sigma)v\leq4(\vartheta_1(\sigma)+\psi(\sigma)) e^{2\nu_1 |t|}$$
and if $\varphi(\sigma)-1>0$ we obtain that:
$$\displaystyle\frac{\vartheta_0^{2}(\sigma)e^{-(4\nu_0+2\nu_1)|t|}}{4\max\{\vartheta_1(\sigma),(\varphi(\sigma)-1)\vartheta_0(\sigma)\}}\leq v^{T}M_{\tilde{A}}(t,t+\sigma)v\leq4(\vartheta_1(\sigma)+\psi(\sigma)) e^{2\nu_1 |t|},$$
which proves the first estimate of non-uniform complete observability of the system \eqref{controlA-KC}. In order to prove the estimates for 
$$N_{\tilde{A}}(t+\sigma,t)=\Phi_{\tilde{A}}(t,t+\sigma)M_{\tilde{A}}(t,t+\sigma)\Phi_{\tilde{A}}(t,t+\sigma),$$ 
we can combine the previous estimation and the non-uniform bounded growth property by using Theorem \ref{p2l3obsNOUNIFORME}, which allows us to conclude the non-uniform complete observability of the system \eqref{controlA-KC}.
\end{proof}

\section{Main result; A reciprocal result about the detectability and observability}

In this section, we will use the result of the previous section in order to complete the equivalence between the concepts of non-uniform complete observability and non-uniform exponential detectability\footnote{In the uniform framework a complete equivalence was proved in \cite[Proposition 1]{Tranninger}.}. Specifically, in \cite[Theorem 9]{HM} it was proven that assuming the NUCO property, we obtain the \textit{non-uniform exponential detectability} (NUED), while here we will prove the implication in the opposite sense. A detailed definition of NUED is provided below.

It is important to emphasize that in \cite[Subsection 6.3]{HM} it was proven with a counterexample that it cannot necessarily be guaranteed that the property of non-uniform exponential detectability implies non-uniform complete observability, but in this section it will be proven that under additional conditions, we can achieve observability in the non-uniform framework. 

We will begin by providing context for the most important aspects of non-uniform exponential detectability. 

\begin{definition}
    The linear system
    \begin{equation}
  \label{ALS}
 \dot{x}=V(t)x
 \end{equation}
    is said to be nonuniformly exponentially stable \textnormal{(NUES)} forward if there exist constants $\beta>0$, $\delta\geq0$  and $M\geq1$ such that for all $s\in\mathbb{R}$ and for all $t\geq s$:
\begin{equation}
\label{NUESInfinito}
\|\Phi_{V}(t,s)\|\leq Me^{\delta |s|}e^{-\beta(t-s)}.
\end{equation}
\end{definition}

\begin{remark}
\label{estabilidadNU}
We emphasize that in Definition 15 ii) in \textnormal{\cite{HMR}}, it is imposed the condition $\delta\in[0,\beta)$, which is framed in the spectral perspective discussed in that paper.
\end{remark}

\begin{definition}
    The linear system \eqref{ALS} is said to be nonuniformly exponentially stable \textnormal{(NUES)} backward if there exist constants $\beta>0$, $\delta>0$  and $M\geq1$ such that for all $s\in\mathbb{R}$ and for all $t\leq s$:
\begin{equation}
\label{NUESMenosInfinito}
\|\Phi_{V}(t,s)\|\leq Me^{\delta |s|}e^{\beta(t-s)}.
\end{equation}
\end{definition}

% From the concept of nonuniform exponential stability, we can extend the notion of uniform exponential detectability to this new framework.

% By considering the system \eqref{control1a}-\eqref{control1b}, the exponential detectability consists of guaranteeing the existence of an observer $L:\mathbb{R}\to M_{n\times p}(\mathbb{R})$ described by the equation
% \begin{equation*}
% % \label{observador}
% \dot{\hat{x}}(t)=A(t)\hat{x}(t)+L(t)[y(t)-C(t)\hat{x}(t)],
% \end{equation*}
% such that the linear system
% \begin{equation}
%         \label{SistError}
%         \dot{e}(t)=[A(t)-L(t)C(t)]e(t)
%     \end{equation}
% will be exponentially stable, with $e(t)=x(t)-\dot{\hat{x}}(t)$ being the estimation error

\begin{definition}
    The system \eqref{control} is called nonuniformly exponentially detectable \textnormal{(NUED)} if there exists an observer $L:\mathbb{R}\to M_{n\times p}(\mathbb{R})$ described by the equation
\begin{equation*}
% \label{observador}
\dot{\hat{x}}(t)=A(t)\hat{x}(t)+L(t)[y(t)-C(t)\hat{x}(t)],
\end{equation*}
such that the linear system 
\begin{equation}
        \label{SistError}
        \dot{e}(t)=\tilde{E}(t)e(t), \quad \textnormal{with}\; \tilde{E}(t)=A(t)-L(t)C(t)
    \end{equation}
is \textnormal{NUES} forward, with $e(t)=x(t)-\dot{\hat{x}}(t)$ being the observer estimation error.
\end{definition}

Based on the previous definitions and results, we state the main theorem of this work. 
\begin{comment}
(In the uniform framework, complete equivalence was proved in \cite[Proposition 1]{Tranninger}).
\end{comment}
\begin{theorem}
\label{reciprocodetectabilidadyobservabilidad}
    Suppose that the system \eqref{lin} admits non-uniform bounded growth with constants $K_0$, $a$ and  $\varepsilon$, and additionally verifies the \textnormal{NUES} backward property with constants $K$, $\alpha$ and $\mu$ satisfying $\alpha>\mu$. On the other hand, if the system \eqref{SistError} is \textnormal{NUES} forward, that is, the system verify the \textnormal{NUED} condition, the estimate \eqref{Ccondition} is satisfied and the following conditions hold:
\begin{equation}
\label{cotaL}
\|L(t)\|\leq \mathcal{L}e^{-\ell |t|},\quad \textnormal{with}\; \ell>\alpha+\mu
\end{equation}
\begin{equation}
\label{CotasDeExponentes}
\ell>\gamma+\varepsilon
\end{equation}
then the system \eqref{control} is non-uniform completely observable.
\end{theorem}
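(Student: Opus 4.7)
The plan is to invoke Theorem \ref{observableconK} with the observer gain $L$ playing the role of the feedback $K$: the hypotheses $\|L(t)\|\leq \mathcal{L}e^{-\ell|t|}$, $\|C(t)\|\leq \mathcal{C}e^{\gamma|t|}$ with $\ell > \gamma + \varepsilon$, together with NUBG of $A$, are exactly what that theorem requires. This reduces the problem to establishing NUCO of the closed-loop pair $(\tilde{E}, C)$ with $\tilde{E}(t) = A(t) - L(t)C(t)$. By Lemma \ref{RobustezNUBG}, $\tilde{E}$ still enjoys NUBG, so once we produce the $M_{\tilde{E}}$-bounds of type \eqref{Mobserv}, Theorem \ref{p2l3obsNOUNIFORME} will automatically furnish the $N_{\tilde{E}}$-bounds of type \eqref{Mobserv2}.

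The upper bound for $M_{\tilde{E}}(t, t+\sigma)$ is a direct integration of $\|C(s)\Phi_{\tilde{E}}(s,t)v\|^2$ using NUBG of $\tilde{E}$ and the bound on $C$. For the lower bound, I would start from the variation-of-parameters identity
\begin{equation*}
\Phi_A(s,t) v = \Phi_{\tilde{E}}(s,t) v + \int_t^s \Phi_A(s,p)\, L(p)\, C(p)\, \Phi_{\tilde{E}}(p,t) v\, dp,
\end{equation*}
take Euclidean norms, and apply the reverse triangle inequality together with Cauchy--Schwarz on the right-hand integral to obtain
\begin{equation*}
\|\Phi_A(s,t)v\| - \|\Phi_{\tilde{E}}(s,t)v\| \leq \left(\int_t^s \|\Phi_A(s,p)\|^2\|L(p)\|^2\, dp\right)^{1/2}\sqrt{v^T M_{\tilde{E}}(t,s) v}.
\end{equation*}
The NUES-backward hypothesis on $A$ with $\alpha > \mu$ furnishes the forward \emph{growth} lower bound $\|\Phi_A(s,t)v\| \geq K^{-1}e^{-\mu|s|}e^{\alpha(s-t)}\|v\|$, obtained from the inversion relation $\|v\|\leq \|\Phi_A(t,s)\|\cdot \|\Phi_A(s,t)v\|$, while NUED yields the forward decay $\|\Phi_{\tilde{E}}(s,t)v\| \leq Me^{\delta|t|}e^{-\beta(s-t)}\|v\|$. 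NUBG of $A$ together with $\|L(p)\|\leq \mathcal{L}e^{-\ell|p|}$ (using $\ell > \varepsilon$, implied by $\ell > \gamma + \varepsilon$) controls the first factor on the right by a constant multiple of $e^{a(s-t)}$.

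Substituting $s = t+\sigma$ and using $|s|\leq |t|+\sigma$ yields an inequality of the schematic form
\begin{equation*}
C_1 e^{-\mu|t|}e^{(\alpha-\mu)\sigma}\|v\| - C_2 e^{\delta|t|}e^{-\beta\sigma}\|v\| \leq C_3 e^{a\sigma}\sqrt{v^T M_{\tilde{E}}(t,t+\sigma)v}.
\end{equation*}
Since $\alpha - \mu > 0$ and $\beta > 0$, choosing $\sigma \geq \sigma_0(t)$ with $\sigma_0(t)$ an explicit linear function of $|t|$ makes the subtracted term at most half of the leading one, delivering $v^T M_{\tilde{E}}(t,t+\sigma)v \geq \vartheta_0(\sigma)e^{-2\mu|t|}\|v\|^2$ for a strictly positive function $\vartheta_0$ on $[0,\infty)$ proportional to $e^{2(\alpha - \mu - a)\sigma}$. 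The main obstacle is precisely this bookkeeping of exponents: the forward growth rate $\alpha$ inherited from NUES-backward of $A$ must outpace both the non-uniform drift $\mu$ and the crude NUBG factor $e^{a\sigma}$ produced by Cauchy--Schwarz, while the hypotheses $\alpha > \mu$ and $\ell > \gamma + \varepsilon$ must be used simultaneously to keep the integral factors finite and to license the reduction step via Theorem \ref{observableconK}.
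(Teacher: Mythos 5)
Your proposal is correct, and it reaches the conclusion by a genuinely different route than the paper. The paper argues by contradiction: it assumes NUCO fails for \eqref{control}, transfers the failure to the closed-loop pair via Theorem \ref{observableconK}, fixes $\nu_0=\alpha$ and a \emph{constant} $\vartheta_0=1/(9(K\mathcal{L}w)^2)$, and from the identity $I=\Phi_{A}(t,t+\sigma)\Phi_{\tilde{E}}(t+\sigma,t)+\int_t^{t+\sigma}\Phi_{A}(t,s)L(s)C(s)\Phi_{\tilde{E}}(s,t)\,ds$ derives $\|v\|\leq\tfrac{2}{3}\|v\|$; crucially, it estimates the Cauchy--Schwarz kernel $\|\Phi_{A}(t,s)\|\,\|L(s)\|$ through the NUES-backward bound and uses $\ell>\alpha+\mu$ (via the sign conditions $\alpha+\ell-\mu>0$ and $\alpha-\ell+\mu<0$, with a case split on $t<0$ and $t\geq 0$) to make that factor bounded \emph{uniformly in} $\sigma$. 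You instead work directly: you keep the variation-of-parameters identity in the form $\Phi_{A}(s,t)v=\Phi_{\tilde{E}}(s,t)v+\int_t^s\Phi_{A}(s,p)L(p)C(p)\Phi_{\tilde{E}}(p,t)v\,dp$, get a pointwise lower bound on $\|\Phi_{A}(s,t)v\|$ by inverting the NUES-backward estimate, bound the kernel $\|\Phi_{A}(s,p)\|\,\|L(p)\|$ by NUBG plus $\ell>\varepsilon$, and absorb the resulting crude factor $e^{a\sigma}$ into a $\sigma$-dependent $\vartheta_0(\sigma)\propto e^{2(\alpha-\mu-a)\sigma}$ --- which is legitimate, since the NUCO definition only requires $\vartheta_0$ to be a positive function of $\sigma$, so contrary to what your phrase ``must outpace the crude NUBG factor'' might suggest, no condition $\alpha-\mu>a$ is needed. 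Two consequences of your route are worth noting. First, your argument never invokes $\ell>\alpha+\mu$ (only $\alpha>\mu$, $\ell>\gamma+\varepsilon$, and the NUES/NUBG data), so it nominally proves the theorem under weaker hypotheses, at the price of a non-constant (possibly decaying) $\vartheta_0$ where the paper obtains a constant one. Second, by establishing \emph{both} Gramian bounds for $M_{\tilde{E}}$ and then transferring NUCO back through the equivalence in Theorem \ref{observableconK}, you sidestep a delicate point in the paper's negation step \textbf{(N)}: the failure of \eqref{MobservE} could a priori occur through the upper inequality rather than the lower one, a case the paper's contradiction argument does not separately treat, whereas your direct proof is immune to it. The closing steps (Lemma \ref{RobustezNUBG} for NUBG of $\tilde{E}$, the direct upper bound for $M_{\tilde{E}}$, and Theorem \ref{p2l3obsNOUNIFORME} to recover the $N_{\tilde{E}}$-bounds) coincide with the paper's.
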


\begin{proof}
    In order to obtain a contradiction, we assume that the system \eqref{control} does not satisfy the non-uniform complete observability property. In this first part, we will describe the hypotheses and their respective consequences.

    % To begin with, we will consider all initial hypotheses, in addition to assuming that system \eqref{SistError} verifies the NUES forward property and that system \eqref{control} does not satisfy complete non-uniform observability, all with the aim of obtaining a contradiction.

    % In this first part, we will describe the hypotheses and their respective consequences.

    Firstly, if the system \eqref{SistError} admits the NUES forward property, then there exist constants $K_e$, $\alpha_e$ and $\mu_e$ such that for all $t_1\in\mathbb{R}$ and for all $t_2\geq t_1$, we have that
    \begin{equation}
    \label{cotaEtildeNUESforward}
\left\|\Phi_{\tilde{E}}\left(t_2, t_1\right)\right\| \leq K_{e} e^{-\alpha_{e}\left(t_2-t_1\right)+\mu_{e}|t_1|},
\end{equation}
If system \eqref{control1a} is NUES backward, then there exist constants $K$, $\alpha$ and $\mu$ such that for all $t_1\in\mathbb{R}$ and for all $t_2\geq t_1$ we have that
\begin{equation}
\label{cotaANUESbackward}
\left\|\Phi_{A}\left(t_1, t_2\right)\right\| \leq K e^{\alpha\left(t_1-t_2\right)+\mu|t_2|}
\end{equation}

On the other hand, we note that if the system \eqref{control1a} has non-uniform bounded growth with constants $K_0$, $a$ and $\varepsilon$, then Lemma \ref{RobustezNUBG} guarantees that the system \eqref{SistError} also verifies the property of non-uniform bounded growth with constants $K_0$, $a+K_0\mathcal{K}\mathcal{C}$ and $\varepsilon$.

Thus, if any of the conditions \eqref{Mobserv}-\eqref{Mobserv2} are satisfied, the other one is guaranteed,  by Theorem \ref{p2l3obsNOUNIFORME}. For this reason, we will deny both conditions \eqref{Mobserv}-\eqref{Mobserv2}, which directly affects system 
\begin{subequations}\label{controlA-LC}
	\begin{empheq}[left=\empheqlbrace]{align}
		& \dot{x}(t)=(A(t)-L(t)C(t))x(t), \label{control1A-LC} \\
		& y(t)=C(t)x(t), \label{control1C}
	\end{empheq}
\end{subequations}
since based on conditions \eqref{Ccondition}, \eqref{cotaL} and \eqref{CotasDeExponentes}, Theorem \ref{observableconK} guarantees that the system \eqref{controlA-LC}\footnote{\eqref{controlA-LC} denotes the control system \eqref{control1A-LC}-\eqref{control1C}} also does not admit non-uniform complete observability.

Therefore, if we deny the non-uniform complete observability of the system \eqref{control1A-LC}-\eqref{control1C}, then we have that:\\

\noindent\textbf{(N)} $\forall\; \nu_0, \nu_1,\tilde{\nu}_0,\tilde{\nu}_1\geq0$,
$\forall\;\vartheta_0(\cdot),\vartheta_1(\cdot),\varrho_0(\cdot), \varrho_1(\cdot)$ non-negative functions, there exists $t\in \mathbb{R}$ such that $\forall \sigma_0>0$, there exists $\sigma\geq\sigma_0$ such that neither of the following two estimates is satisfied:
\begin{equation}
\label{MobservE}
e^{-2\nu_0 |t|}\vartheta_0(\sigma)I\leq M_{\tilde{E}}(t,t+\sigma)\leq  e^{2\nu_1 |t|}\vartheta_1(\sigma)I,
\end{equation}
\begin{equation}
\label{Mobserv2E}
e^{-2\bar{\nu}_0 |t|}\varrho_0(\sigma)I\leq N_{\tilde{E}}(t,t+\sigma)\leq  e^{2\bar{\nu}_1 |t|}\varrho_1(\sigma)I.
\end{equation}

If we focus only on the negation of \eqref{MobservE}, we will have that 
\begin{equation}
\label{NegacionMObserv}
M_{\tilde{E}}(t,t+\sigma)<e^{-2\nu_0 |t|}\vartheta_0(\sigma)I \quad \textnormal{or}\quad e^{2\nu_1 |t|}\vartheta_1(\sigma)I<M_{\tilde{E}}(t,t+\sigma).
\end{equation}

Essentially, we will choose the appropriate $\nu$ and $\vartheta_0$ and, for the co\-rres\-pon\-ding $t$, we will choose a suitable $\sigma_0$, in order to find $\sigma\geq\sigma_0$ that does not fulfill \eqref{MobservE}. This will lead to a contradiction. The transition matrix of \eqref{SistError} is given by:
    $$
\Phi_{\tilde{E}}\left(t_2, t_1\right)=\Phi_{A}\left(t_2, t_1\right)-\int_{t_1}^{t_2} \Phi_{A}\left(t_2, s\right) L(s) C(s) \Phi_{\tilde{E}}\left(s, t_1\right) d s,
$$
for all $t_1, t_2\in\mathbb{R}$. Multiplying by the left side of the previous identity by $\Phi_{A}\left(t_1, t_2\right)$ we have that:

$$
\begin{array}{rcl}
\displaystyle\Phi_{A}\left(t_1, t_2\right) \Phi_{\tilde{E}}\left(t_2, t_1\right)&=&\displaystyle I-\int_{t_1}^{t_2} \Phi_{A}\left(t_1, s\right) L(s) C(s) \Phi_{\tilde{E}}\left(s, t_1\right) d s\\
I&=&\displaystyle\Phi_{A}\left(t_1, t_2\right) \Phi_{\tilde{E}}\left(t_2, t_1\right)+\int_{t_1}^{t_2} \Phi_{A}\left(t_1, s\right) L(s) C(s) \Phi_{\tilde{E}}\left(s, t_1\right) d s.
\end{array}
$$

By considering this identity and applying it to a non trivial vector $v\in\mathbb{R}^{n}$ we have that:

\begin{equation}
\label{estimacionV}
\begin{array}{rcl}
\|v\|&\leq& \|\Phi_{A}\left(t_1, t_2\right)\| \| \Phi_{\tilde{E}}\left(t_2, t_1\right)\|\|v\|\\
&+&\displaystyle\int_{t_1}^{t_2}\|\Phi_{A}\left(t_1, s\right)\| \|L(s)\| \|C(s) \Phi_{\tilde{E}}\left(s, t_1\right)v\| ds
\end{array}
\end{equation}
If we replace \eqref{cotaL}, \eqref{cotaEtildeNUESforward} and \eqref{cotaANUESbackward} in \eqref{estimacionV}, we have that for $t_1\leq t_2$:
\begin{equation}
\label{estimacionVdetalle}
\begin{array}{rcl}
\|v\|
&\leq& \displaystyle Ke^{\alpha(t_1-t_2)+\mu|t_2|}K_{e}e^{-\alpha_{e}(t_2-t_1)+\mu_{e}|t_1|}\|v\|\\
&+&\displaystyle\int_{t_1}^{t_2}Ke^{\alpha(s-t_1)+\mu|s|}\mathcal{L}e^{-\ell|s|}\|C(s)\Phi_{\tilde{E}}(s,t_1)v\| ds\\
&=&\displaystyle KK_{e}e^{-(\alpha+\alpha_{e})(t_2-t_1)+\mu|t_2|+\mu_{e}|t_1|}\|v\|\\
&+&\displaystyle\int_{t_1}^{t_2}K\mathcal{L}e^{\alpha(s-t_1)-(\ell-\mu)|s|}\|C(s)\Phi_{\tilde{E}}(s,t_1)v\| ds
\end{array}
\end{equation}

Based on condition \textbf{(N)} we define $\nu_{0}=\alpha$ and 
$$\vartheta_0=\displaystyle\frac{1}{9(K\mathcal{L}w)^{2}}, \quad \textnormal{with } w=\left(\frac{1}{\sqrt{2(\alpha+\ell-\mu)}}+\frac{1}{\sqrt{2|\alpha-\ell+\mu|}}\right),$$
then there exists $t\in\mathbb{R}$ such that if we choose $\sigma_0>0$ which satisfies the following conditions:  
\begin{equation} \label{Sigma0Abajo}
\mathcal{A}(t)=\displaystyle\frac{\ln(3KK_e)}{\alpha-\mu+\alpha_{e}}+\frac{\mu+\mu_{e}}{\alpha-\mu+\alpha_{e}}|t|\leq \sigma_{0} \quad \textnormal{and}\quad 0<t+\sigma_0,
\end{equation}
there exists $\sigma\geq\sigma_0$ such that the first inequality of \eqref{NegacionMObserv} is verified.

By considering the integral term of \eqref{estimacionVdetalle}, with $t_1=t$ and $t_2=t+\sigma$, we can use Cauchy-Schwarz inequality and the first estimate of \eqref{NegacionMObserv} in order to work the second term of \eqref{estimacionVdetalle}:
$$\begin{array}{rcl}
&&\displaystyle\int_{t}^{t+\sigma}K\mathcal{L}e^{\alpha(s-t)-(\ell-\mu)|s|}\|C(s)\Phi_{\tilde{E}}(s,t)v\| ds\\
&\leq&\left(\displaystyle\int_{t}^{t+\sigma}\left(K\mathcal{L}e^{\alpha(s-t)-(\ell-\mu)|s|}\right)^{2}\right)^{\frac{1}{2}}\cdot\left(\displaystyle\int_{t}^{t+\sigma}\|C(s)\Phi_{\tilde{E}}(s,t)v\|^{2} ds\right)^{\frac{1}{2}}\\
&\leq&\left(\displaystyle\int_{t}^{t+\sigma}K^{2}\mathcal{L}^{2}e^{2\alpha(s-t)-2(\ell-\mu)|s|}ds\right)^{\frac{1}{2}}e^{-\nu_{0}|t|}\sqrt{\vartheta_{0}}\|v\|.
\end{array}$$

We emphasize the fact that condition \textbf{(N)} gives rise to two cases for the possible values of $t$ and, consequently, two ways of estimating the second term of \eqref{estimacionVdetalle}:
\begin{itemize}
    \item [$\bullet$] Case 1: $t<0$. By considering $\sigma\geq\sigma_0$ and the second condition of \eqref{Sigma0Abajo}, then $t+\sigma>0$ and we have that:
$$\left(\displaystyle\int_{t}^{t+\sigma}K^{2}\mathcal{L}^{2}e^{2\alpha(s-t)-2(\ell-\mu)|s|}ds\right)^{\frac{1}{2}}e^{-\nu_{0}|t|}\sqrt{\vartheta_{0}}\|v\| =  I_1+I_2,$$
where
$$I_1=\left(\displaystyle\int_{t}^{0}K^{2}\mathcal{L}^{2}e^{2\alpha(s-t)-2(\ell-\mu)|s|}ds\right)^{\frac{1}{2}}e^{-\nu_{0}|t|}\sqrt{\vartheta_{0}}\|v\|$$
and
$$I_2=\left(\displaystyle\int_{0}^{t+\sigma}K^{2}\mathcal{L}^{2}e^{2\alpha(s-t)-2(\ell-\mu)|s|}ds\right)^{\frac{1}{2}}e^{-\nu_{0}|t|}\sqrt{\vartheta_{0}}\|v\|$$

    For $I_1$, due to estimate \eqref{cotaL}, we observe that $\alpha+\ell-\mu>0$, then we have that: 
    $$\begin{array}{rcl}
I_1&=&\sqrt{K^{2}\mathcal{L}^{2}\vartheta_0}e^{-\alpha t}\displaystyle\left(\int_{t}^{0}e^{2(\alpha+\ell-\mu)s}ds\right)^{\frac{1}{2}}e^{-\nu_{0}|t|}\|v\|\\
    &=&\displaystyle\sqrt{\frac{K^{2}\mathcal{L}^{2}\vartheta_0}{2(\alpha+\ell-\mu)}}e^{\alpha |t|}e^{-\nu_{0}|t|}\left(1-e^{2(\alpha+\ell-\mu)t}\right)^{\frac{1}{2}}\|v\|\\
    &\leq&\displaystyle\sqrt{\frac{K^{2}\mathcal{L}^{2}\vartheta_0}{2(\alpha+\ell-\mu)}}e^{(\alpha-\nu_0) |t|}\|v\|=\displaystyle \sqrt{\frac{K^{2}\mathcal{L}^{2}\vartheta_0}{2(\alpha+\ell-\mu)}}\|v\|
    \end{array}$$

On the other hand, from \eqref{cotaL} we see that $a-\ell+\mu<0$, then for $I_2$ we have that:

$$\begin{array}{rcl}
    I_2&=&\sqrt{K^{2}\mathcal{L}^{2}\vartheta_0}e^{-\alpha t}\displaystyle\left(\int_{0}^{t+\sigma}e^{2(\alpha-\ell+\mu)s}ds\right)^{\frac{1}{2}}e^{-\nu_{0}|t|}\|v\|\\
    &=&\displaystyle\sqrt{\frac{K^{2}\mathcal{L}^{2}\vartheta_0}{2|\alpha-\ell+\mu|}}e^{\alpha |t|}e^{-\nu_{0}|t|}\left(1-e^{2(\alpha-\ell+\mu)(t+\sigma)}\right)^{\frac{1}{2}}\|v\|\\
    &\leq&\displaystyle\sqrt{\frac{K^{2}\mathcal{L}^{2}\vartheta_0}{2|\alpha-\ell+\mu|}}e^{(\alpha-\nu_0) |t|}\|v\|=\displaystyle \sqrt{\frac{K^{2}\mathcal{L}^{2}\vartheta_0}{2|\alpha-\ell+\mu|}}\|v\|
    \end{array}$$

    Thus we can ensure that:
$$\left(\displaystyle\int_{t}^{t+\sigma}K^{2}\mathcal{L}^{2}e^{2\alpha(s-t)-2(\ell-\mu)|s|}ds\right)^{\frac{1}{2}}e^{-\nu_{0}|t|}\sqrt{\vartheta_{0}}\|v\| \leq w\sqrt{K^{2}\mathcal{L}^{2}\vartheta_0} \|v\|= \frac{1}{3}\|v\|$$
    
    \item[$\bullet$] Case 2: $t\geq0$. From \eqref{cotaL}, we know that $\alpha-\ell+\mu<0$, then we have the following estimate:

    $$\begin{array}{rcl}
         &&\left(\displaystyle\int_{t}^{t+\sigma}K^{2}\mathcal{L}^{2}e^{2\alpha(s-t)-2(\ell-\mu)|s|}ds\right)^{\frac{1}{2}}e^{-\nu_{0}|t|}\sqrt{\vartheta_{0}}\|v\|\\ &\leq&  \sqrt{K^{2}\mathcal{L}^{2}\vartheta_0}e^{-\alpha |t|}\displaystyle\left(\int_{t}^{t+\sigma}e^{2(\alpha-\ell+\mu)s}ds\right)^{\frac{1}{2}}e^{-\nu_{0}|t|}\|v\|\\
    &=&\displaystyle\sqrt{\frac{K^{2}\mathcal{L}^{2}\vartheta_0}{2|\alpha-\ell+\mu|}}e^{-\alpha |t|}e^{-\nu_{0}|t|}\left(e^{2(\alpha-\ell+\mu)t}-e^{2(\alpha-\ell+\mu)(t+\sigma)}\right)^{\frac{1}{2}}\|v\|\\
    &\leq&\displaystyle\sqrt{\frac{K^{2}\mathcal{L}^{2}\vartheta_0}{2|\alpha-\ell+\mu|}}e^{-(\alpha+\nu_0) |t|}\|v\|
    \leq\displaystyle \sqrt{\frac{K^{2}\mathcal{L}^{2}\vartheta_0}{2|\alpha-\ell+\mu|}}\|v\|\\
    &\leq& \displaystyle w\sqrt{K^{2}\mathcal{L}^{2}\vartheta_0}\|v\|=\frac{1}{3}\|v\|
    \end{array}$$

\end{itemize}

On the other hand, in relation to the first term of \eqref{estimacionVdetalle}, we can note that the first condition of \eqref{Sigma0Abajo} allows us to state that for $\sigma_0\leq\sigma$, we obtain the following:
$$KK_{e}e^{-(\alpha-\mu+\alpha_{e})\sigma+(\mu+\mu_{e})|t|}\|v\|\leq\frac{1}{3}\|v\|.$$

Based on the previous work, we can estimate \eqref{estimacionVdetalle}:
$$\|v\|\leq \frac{1}{3}\|v\|+\frac{1}{3}\|v\|=\frac{2}{3}\|v\|$$
which is a contradiction.
\end{proof}

\begin{remark}
As appears in \textnormal{\cite [Proposition 1]{Tranninger}}, the previous theorem includes as a hypothesis the \textnormal{NUES} forward condition for the system \eqref{lin} (in \textnormal{\cite{Tranninger}} this type of system is called anti-stable), but additionally considers the hypothesis in relation to the growth rate and the non-uniform parameter, which allows us to make the necessary estimates to achieve the result. This type of hypothesis is quite common, especially when dealing with spectral theory associated with the non-uniform exponential dichotomy (see Remark \ref{estabilidadNU}). A complete and detailed spectral theory was described in \textnormal{\cite{Silva2023, Zhang}}. 
\end{remark}

\section{Consequence of duality; relationship between controllability and stabilizability}

\subsection{Preservation of the non-uniform complete controllability via state feedback}

This subsection will present an important consequence of the proved dua\-li\-ty between non-uniform complete controllability and non-uniform complete observability. Specifically, based on Theorem \ref{observableconK}, the preservation of non-uniform complete controllability via input feedback will be established.

By considering the linear state variable feedback $u(t)=-F(t)x(t)+v(t)$, with $v(t)$ a new input, and replacing this expression in
\eqref{control1a} we obtain the following system:
\begin{equation}
    \label{LTVControlabilidadperturbada}
    \dot{x}(t)=(A(t)-B(t)F(t))x(t)+B(t)v(t).
\end{equation}

The following theorem seeks to prove the same idea as Theorem \ref{observableconK}, that is, to prove the preservation of non-uniform complete controllability via state feedback.

\begin{theorem}
Assume that the system \eqref{lin} admits non-uniform bounded growth with constants $K_0$, $a$ and $\varepsilon$. If the following estimates are satisfied for any $z\in \mathbb{R}$:
\begin{equation}
\label{L}
\|F^{T}(z)\|\leq\mathcal{L}_{c}e^{-\ell_{c} |z|},
\end{equation}
\begin{equation}
\label{B}
\|B^{T}(z)\|\leq\mathcal{B}_{c}e^{\beta_{c} |z|},
\end{equation}
\begin{equation}
\label{parameters}
\ell_{c}>\beta_{c}+\varepsilon,
\end{equation}
with $\ell_{c}>0$ and $\beta_c\geq0$, then the system \eqref{control1a} is non-uniformly completely controllable if and only if the system \eqref{LTVControlabilidadperturbada} is non-uniformly completely controllable.
\end{theorem}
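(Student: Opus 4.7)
The plan is to exploit the duality between non-uniform complete controllability and non-uniform complete observability—codified in Theorem \ref{adjuntoNU} and the Gramian identity \eqref{identidadM}—in order to reduce the statement to a direct application of Theorem \ref{observableconK}. Concretely, a control pair $(A(t), B(t))$ is NUCC if and only if the auxiliary observation system $\dot z = -A^T(t) z$, $y = B^T(t) z$ is NUCO: since the transition matrix of $\dot z = -A^T z$ is $\Psi(t,s) = \Phi_A^T(s,t)$, the observability Gramian of this pair coincides with the controllability Gramian $W_A(\cdot,\cdot)$, and Theorems \ref{p2l3obsNOUNIFORME} and \ref{p2l3} reconcile the two companion Gramians under non-uniform bounded growth. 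Applying the same correspondence to the closed loop, the adjoint observation system associated to $(A - BF, B)$ has dynamics $-(A - BF)^T = -A^T + F^T B^T$ and output $B^T$.

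These two adjoint observation systems are precisely related by an output feedback of the form handled by Theorem \ref{observableconK}: set
\[
\mathcal{A}(t) := -A^T(t), \qquad \mathcal{C}(t) := B^T(t), \qquad \mathcal{K}(t) := -F^T(t),
\]
and observe that $\mathcal{A}(t) - \mathcal{K}(t)\mathcal{C}(t) = -A^T(t) + F^T(t) B^T(t)$, which is the dynamics of the closed-loop adjoint. To apply Theorem \ref{observableconK} one must verify its three hypotheses for $(\mathcal{A}, \mathcal{C}, \mathcal{K})$. Using $\|\Psi(t,s)\| = \|\Phi_A(s,t)\|$ together with the triangle inequality $|t| \leq |t-s| + |s|$, the non-uniform bounded growth of \eqref{lin} transfers to $\mathcal{A}$ with constants $K_0$, $a + \varepsilon$, $\varepsilon$. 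Crucially, the non-uniform exponent $\varepsilon$ is unchanged. Next, the bounds \eqref{L} and \eqref{B} immediately provide $\|\mathcal{K}(t)\| \leq \mathcal{L}_c e^{-\ell_c|t|}$ and $\|\mathcal{C}(t)\| \leq \mathcal{B}_c e^{\beta_c|t|}$, and the compatibility condition $\delta > \gamma + \varepsilon$ of Theorem \ref{observableconK} becomes $\ell_c > \beta_c + \varepsilon$, which is exactly \eqref{parameters}.

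The biconditional of Theorem \ref{observableconK} then yields that the two adjoint observation systems are simultaneously NUCO, and translating back through the Gramian identity—together with Theorems \ref{p2l3obsNOUNIFORME} and \ref{p2l3}, which recover both Gramian inequalities $(W, K)$ from the non-uniform bounded growth and a single one-sided bound—gives the desired equivalence between NUCC of $(A, B)$ and NUCC of $(A - BF, B)$. The only delicate point I expect is the bookkeeping in the transfer of non-uniform bounded growth to the adjoint: the growth rate may increase from $a$ to $a + \varepsilon$, but because only the non-uniform exponent $\varepsilon$ appears in \eqref{parameters}, the hypothesis required by Theorem \ref{observableconK} is exactly the one supplied by the assumptions.
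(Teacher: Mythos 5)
Your proposal is correct and follows essentially the same route as the paper: reduce via the adjoint duality of Theorem \ref{adjuntoNU} to an output-feedback problem for the pair $(-A^{T},B^{T})$ with gain $\mathcal{K}=-F^{T}$, apply Theorem \ref{observableconK}, and translate back. In fact you are slightly more careful than the paper's own proof, which leaves implicit the verification that non-uniform bounded growth passes from $\Phi_{A}$ to the adjoint transition matrix $\Phi_{A}^{T}(\tau,t)$ — your bookkeeping (constants $K_{0}$, $a+\varepsilon$, $\varepsilon$, with the non-uniform exponent $\varepsilon$ unchanged so that \eqref{parameters} supplies exactly the condition $\delta>\gamma+\varepsilon$) correctly fills that gap.
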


\begin{proof}
    By using Theorem \ref{adjuntoNU}, the system \eqref{control1a} is non-uniformly completely controllable if and only if the system
    % \begin{subequations}
$$\left\{	\begin{array}{rcl}
    
		 \dot{x}(t)&=&-A^{T}(t)x(t), 
        % \label{control1a} 
        \\
		 y(t)&=&-B^{T}(t)x(t), 
        % \label{control1b}
	\end{array}\right.$$
% \end{subequations}
is non-uniformly completely observable. By using this fact, by Theorem \ref{observableconK} and by considering the conditions \eqref{L}, \eqref{B} and \eqref{parameters}, the system
% \begin{subequations*}
	$$\left\{\begin{array}{rcl}
		 \dot{x}(t)&=&(-A^{T}(t)-F^{T}(t)(-B^{T}(t))x(t), 
        % \label{control1a} 
        \\
		 y(t)&=&-B^{T}(t)x(t), %\label{control1b}
	\end{array}\right.$$
% \end{subequations}
is also non-uniformly completely observable. Finally, by using again the Theo\-rem \ref{adjuntoNU}, we can conclude that the system \eqref{LTVControlabilidadperturbada}
% \begin{equation*}
%     % \label{LTVControlabilidad}
%     \dot{x}(t)=(A(t)-B(t)L(t))x(t)+B(t)v(t)
% \end{equation*}
is non-uniformly completely controllable,
%which corresponds to the transformation of the system \eqref{LTVControlabilidad} by considering the input feedback $u(t)=-L(t)x(t)+v(t)$, 
proving the result.
\end{proof}

\subsection{Stabilizability and its consequence on controllability}

As well as throughout this section, we emphasize the importance of adjoint and dual systems for
a linear system, namely, given the system \eqref{ALS}, we define its \textit{adjoint linear system} and its \textit{dual linear system} respectively as
\begin{equation}
    \label{adjuntoV}
    \dot{x}(t)=-V^{T}(t)x(t),
\end{equation} 
\begin{equation}
    \label{dualV}
    \dot{x}(t)=V^{T}(-t)x(t).
\end{equation} 
% denoting by $\Psi_{a}(\cdot,\cdot)$ and $\Psi_{d}(\cdot,\cdot)$ as their respective transition matrices. 
In this context, it is interesting to know what kind of stability properties each of these two systems possesses based on the stability of the system \eqref{ALS}, as stated in the following theorem (see the proof of Theorem 8 in \cite{HM}).
\begin{theorem}
    \label{EstabilidadOriginalAdjunto} 
    The following three statements are equivalent:
    \begin{itemize}
        \item [i)] The original system \eqref{ALS} is \textnormal{NUES} forward.
        \item [ii)] The adjoint system \eqref{adjuntoV} is \textnormal{NUES} backward.
        \item [iii)] The dual system \eqref{dualV} is \textnormal{NUES} forward.
    \end{itemize}
\end{theorem}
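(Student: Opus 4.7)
The plan is to reduce everything to the transition-matrix identities already recorded in \eqref{Psiadjunto}--\eqref{Psidual}, now applied to the plant $\dot{x}=V(t)x$. These yield
\[\|\Psi^a(t,\tau)\|=\|\Phi_V(\tau,t)\|,\qquad \|\Psi^d(t,\tau)\|=\|\Phi_V(-\tau,-t)\|,\]
so each NUES bound on $\Phi_V$ is transcribed into a bound on $\Psi^a$ or $\Psi^d$ by relabelling time variables.

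To prove (i)$\Rightarrow$(ii), I would fix $t\leq\tau$ and apply the NUES forward bound at the pair $(\tau,t)$, which satisfies $\tau\geq t$, obtaining
\[\|\Psi^a(t,\tau)\|=\|\Phi_V(\tau,t)\|\leq M e^{\delta|t|}e^{-\beta(\tau-t)}=Me^{\delta|t|}e^{\beta(t-\tau)}.\]
The non-uniformity is now anchored at $|t|$ rather than at $|\tau|$, which is the ``wrong'' endpoint for \eqref{NUESMenosInfinito}; I would absorb this mismatch through the triangle inequality $|t|\leq|\tau|+(\tau-t)$, producing
\[\|\Psi^a(t,\tau)\|\leq M e^{\delta|\tau|}e^{(\beta-\delta)(t-\tau)},\]
which is exactly the NUES backward estimate for the adjoint with rate $\beta-\delta$. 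The reverse implication (ii)$\Rightarrow$(i) follows by symmetry, since the adjoint of the adjoint system is the original: the identical computation, with $\Psi^a$ now playing the role of $\Phi_V$, returns the forward NUES bound on $\Phi_V$.

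The equivalence (i)$\Leftrightarrow$(iii) proceeds in the same spirit via $\|\Psi^d(t,\tau)\|=\|\Phi_V(-\tau,-t)\|$. For $t\geq\tau$ one has $-\tau\geq-t$, so applying (i) at $(-\tau,-t)$ gives $\|\Psi^d(t,\tau)\|\leq Me^{\delta|t|}e^{-\beta(t-\tau)}$, and a second use of the triangle inequality $|t|\leq|\tau|+(t-\tau)$ converts this to the NUES forward bound on $\Psi^d$ with rate $\beta-\delta$. The converse is analogous, using that double time-reversal recovers the original system.

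The main (and essentially only) obstacle is cosmetic: transposition and time-reversal swap the endpoint at which the non-uniform factor $e^{\delta|\cdot|}$ is anchored, so compensating this by the triangle inequality costs one factor $e^{\delta|t-\tau|}$ in the exponential and forces the effective decay rate to drop from $\beta$ to $\beta-\delta$. Accordingly, the proof above relies on the working assumption $\delta<\beta$ highlighted in Remark \ref{estabilidadNU}; under that standing hypothesis the three NUES properties are equivalent, with transparent (albeit not identical) constants.
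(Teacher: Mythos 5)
Your proof is correct and follows what is essentially the paper's own route: the paper gives no in-line argument here but defers to the proof of Theorem 8 in \cite{HM}, which likewise rests on the transition-matrix identities \eqref{Psiadjunto}--\eqref{Psidual} (with $A$ replaced by $V$), transcription of the NUES bounds under argument-swapping and time-reversal, and the triangle inequality to relocate the non-uniform anchor, at the cost of degrading the rate from $\beta$ to $\beta-\delta$.

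One point deserves emphasis, and you handled it correctly: your standing hypothesis $\delta<\beta$ (via Remark \ref{estabilidadNU}) is not merely a convenience of your method but is genuinely needed, since the theorem as printed --- with the definitions of NUES forward/backward imposing no relation between $\delta$ and $\beta$ --- can fail. For instance, take the scalar plant with transition matrix $\Phi_V(t,s)=e^{h(s)-h(t)}$ where $h(t)=\max\{\beta t,0\}$ (so $V(t)=-h'(t)$, piecewise continuous): one checks that it is NUES forward, but any admissible forward constants must satisfy $\delta\geq\beta$ (test $t=0$, $s\to-\infty$), while its adjoint has $\Psi^{a}(t,s)=e^{h(t)-h(s)}$ and at $s=0$, $t=-r$ the backward bound would require $1\leq M'e^{-\beta' r}$ for all $r>0$, which is impossible. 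So the loss of rate in your re-anchoring step is intrinsic rather than cosmetic, and your explicit flagging of the regime $\delta\in[0,\beta)$ is exactly the honest way to state the result; the cited source operates under that restriction by definition.
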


Based on the study of stability, the concept of non-uniform exponential stabilizability is defined.

\begin{definition}
    The system \eqref{control1a} is called non-uniformly exponentially stabilizable \textnormal{(NUEStab)} if there exists a linear feedback input $u(t)=-F(t)x(t)$, with $F:\mathbb{R}\to M_{n\times p}(\mathbb{R})$ such that the following closed loop system is \textnormal{NUES} forward: 
\begin{equation}
\label{estabilizabilidad}
\dot{x}(t)=(A(t)-B(t)F(t))x(t).
\end{equation}
\end{definition}

With all this information we can establish the reciprocal result of Theorem 26 in \cite{HMR} (see \cite[Theorem 3]{Ikeda} for the uniform context of stabilizability and controllability): 

\begin{theorem}\label{teo-teorema-algo}
    Suppose that the system \eqref{control1a} admits non-uniform bounded growth with constants $K_0$, $a$ and $\varepsilon$, and additionally verifies the \textnormal{NUES} backward property with constants $K$, $\alpha$ and $\mu$ satisfying $\alpha>\mu$. On the other hand, if the system \eqref{estabilizabilidad} is \textnormal{NUES} forward, that is, the system verifies \textnormal{NUEStab} condition, the estimate \eqref{Ccondition} is satisfied and the following conditions hold:
\begin{equation}
\label{cotaLtraspuesto}
\|F^{T}(t)\|\leq \mathcal{L}_{c}e^{-\ell_{c} |t|},\quad \textnormal{with}\; \ell_{c}>\alpha+\mu
\end{equation}
\begin{equation}
\label{CotasDeExponentestraspuesto}
\ell_{c}>\beta_{c}+\varepsilon
\end{equation}
then the system \eqref{control1a} is non-uniform completely controllable.

\end{theorem}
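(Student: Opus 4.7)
The plan is to mirror the duality argument of the preceding Preservation-via-state-feedback theorem. By Theorem \ref{dualNU} combined with the Gramian identities of Proposition \ref{adjuntoydual}, NUCC of \eqref{control1a} is equivalent to NUCO of a naturally dualized observability system---for instance the time-reversed dual with plant $A^T(-t)$ and output $B^T(-t)$. It then suffices to apply Theorem \ref{reciprocodetectabilidadyobservabilidad} to this dualized system and revert the duality. (Alternatively, one may mimic the proof of Theorem \ref{reciprocodetectabilidadyobservabilidad} directly by setting up the contradiction argument in terms of the controllability Gramian identity $\Phi_{A-BF}(t_2,t_1)=\Phi_A(t_2,t_1)-\int_{t_1}^{t_2}\Phi_A(t_2,s)B(s)F(s)\Phi_{A-BF}(s,t_1)\,ds$ and running the analogous Cauchy--Schwarz estimation.)

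Several hypotheses of Theorem \ref{reciprocodetectabilidadyobservabilidad} transfer straightforwardly. The NUBG of the dualized plant follows from the NUBG of $\dot{x}=A(t)x$ via the identity $\|\Psi(t,s)\|=\|\Phi_A(-s,-t)\|$ combined with the triangle inequality $|t|\le|s|+|t-s|$, which redistributes the non-uniform exponent onto the required argument at the cost of enlarging the growth rate by $\varepsilon$. The growth bound on the dualized output matrix $B^T(-t)$ coincides with \eqref{B}. Choosing the candidate observer gain $L(t)=F^T(-t)$, its decay bound \eqref{cotaL} is given by \eqref{cotaLtraspuesto}, and the dualized error system becomes, up to transposition and time-reversal, the closed-loop $\dot{x}=(A-BF)x$, which is NUES forward by the NUEStab hypothesis; Theorem \ref{EstabilidadOriginalAdjunto} then transfers this stability to the dualized error system, yielding the NUED hypothesis. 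The parameter conditions \eqref{cotaLtraspuesto}--\eqref{CotasDeExponentestraspuesto} match exactly the inequalities \eqref{cotaL}--\eqref{CotasDeExponentes} required by Theorem \ref{reciprocodetectabilidadyobservabilidad}, with the identifications $\ell=\ell_c$, $\gamma=\beta_c$ and $\mathcal{C}=\mathcal{B}_c$.

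The main obstacle is the careful transfer of the NUES-backward hypothesis from $\dot{x}=A(t)x$ to the dualized plant. Writing $\|\Psi(t_1,t_2)\|=\|\Phi_A(-t_2,-t_1)\|$ and applying the hypothesis $\|\Phi_A(t_1',t_2')\|\le K\,e^{\alpha(t_1'-t_2')+\mu|t_2'|}$ at $(t_1',t_2')=(-t_2,-t_1)$ yields, for $t_1\le t_2$, the bound $\|\Psi(t_1,t_2)\|\le K\,e^{\mu|t_1|}\,e^{\alpha(t_1-t_2)}$, in which the non-uniform exponent sits on the wrong time argument. Absorbing $e^{\mu|t_1|}$ into the decay factor via $|t_1|\le|t_2|+(t_2-t_1)$ costs exactly $\mu$ from the decay rate, producing a genuine NUES-backward estimate for the dualized plant with strictly positive rate $\alpha-\mu$, which is precisely what the hypothesis $\alpha>\mu$ ensures. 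A fully analogous transposition argument converts the NUES-forward estimate for $A-BF$ into an NUES-forward estimate for the dualized error system. Once all hypotheses of Theorem \ref{reciprocodetectabilidadyobservabilidad} have been verified in this way, its conclusion gives NUCO of the dualized system, and a final invocation of Theorem \ref{dualNU} returns NUCC of \eqref{control1a}.
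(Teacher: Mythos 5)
Your proposal is correct and follows essentially the same route as the paper's proof: dualize via Theorem \ref{dualNU}, transfer the backward/forward NUES properties to the time-reversed transposed plant and closed loop (the paper delegates this to Theorem \ref{EstabilidadOriginalAdjunto}, while you carry out the transfer by hand and correctly locate where the hypothesis $\alpha>\mu$ is consumed), apply Theorem \ref{reciprocodetectabilidadyobservabilidad} to the dual system with plant $A^{T}(-t)$ and output $B^{T}(-t)$ with the identifications $L(t)=F^{T}(-t)$, $\ell=\ell_{c}$, $\gamma=\beta_{c}$, and conclude NUCC. Your explicit bookkeeping of the NUBG and NUES-backward constants under dualization is in fact more detailed than the paper's own argument, which leaves those verifications implicit.
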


\begin{proof} The idea of the proof is sketched in Figure 
\ref{fig-MainTheoProof}. 
    Firstly, if the system \eqref{control1a} is NUES backward, the Theorem \ref{EstabilidadOriginalAdjunto} ensures that the dual system
    $$\dot{x}(t)=A^{T}(-t)x(t)$$
    is NUES backward. In the same way, if the system \eqref{estabilizabilidad} verifies the property of NUES forward, then we use the Theorem \ref{EstabilidadOriginalAdjunto} in order to obtain that the system
    $$
   \begin{array}{rcl} 
   \dot{x}(t)&=&(A(-t)-B(-t)F(-t))^{T}x(t),\\&=&(A^{T}(-t)-F^{T}(-t)B^{T}(-t))x(t)
   \end{array}$$
    is NUES forward. After that, by considering conditions \eqref{cotaLtraspuesto} and \eqref{CotasDeExponentestraspuesto}, the Theorem \ref{reciprocodetectabilidadyobservabilidad} allows us to ensure that the system
    $$\left\{	\begin{array}{rcl}
    
		 \dot{x}(t)&=&A^{T}(-t)x(t), 
        % \label{control1a} 
        \\
		 y(t)&=&B^{T}(-t)x(t), 
        % \label{control1b}
	\end{array}\right.$$
    is NUCO and finally, by Theorem \ref{dualNU} we conclude that the system \eqref{control1a} is NUCC.  
    \end{proof}
    
    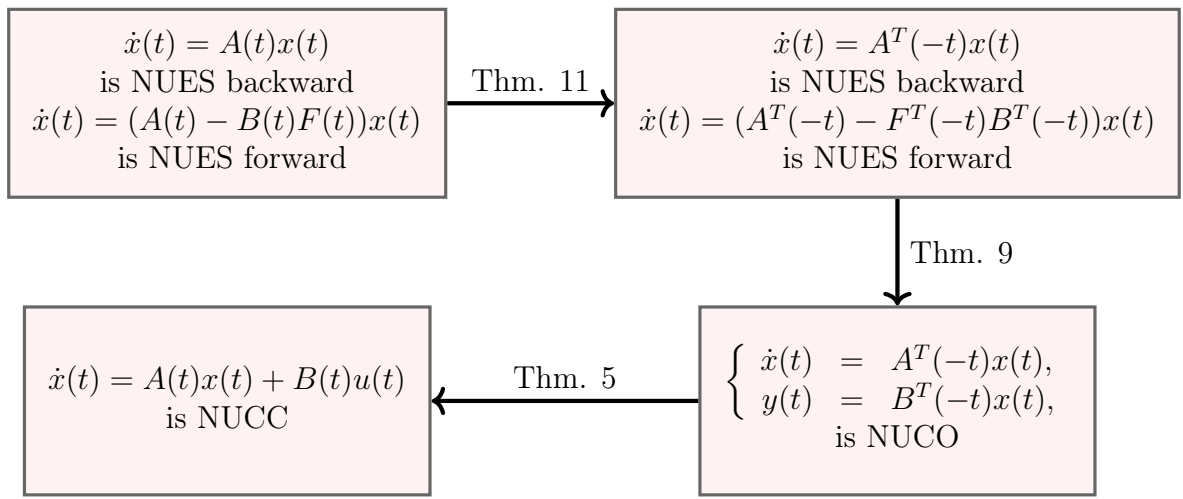
\begin{figure*}[h!]

\centering
\begin{tikzpicture}
[squarednode/.style={rectangle, draw=black!60, fill=red!5, very thick, minimum size=25mm},scale=.99]
%Nodes
\node[squarednode]   at (0,0)   (11)                              {$
\begin{array}{c}
\dot{x}(t)=A(t)x(t)\\
\textnormal{is NUES backward}\\
\dot{x}(t)=(A(t)-B(t)F(t))x(t)\\ \textnormal{ is NUES forward}
\end{array}
$
};

\node[squarednode]   at (9,0)    (12)        {$\begin{array}{c}
\dot{x}(t)=A^{T}(-t)x(t)\\
\textnormal{is NUES backward}\\
\dot{x}(t)=(A^{T}(-t)-F^{T}(-t)B^{T}(-t))x(t)\\
\textnormal{is NUES forward}
\end{array}$
};

\node[squarednode]   at (0,-4)   (21)        {$\begin{array}{c}
\dot{x}(t)= A(t)x(t)+B(t)u(t)\\
\textnormal{is NUCC}   
\end{array}
$
};

\node[squarednode]  at (9,-4)    (22)        {$\begin{array}{c}
\left\{	\begin{array}{rcl}
    
		 \dot{x}(t)&=&A^{T}(-t)x(t), 
        % \label{control1a} 
        \\
		 y(t)&=&B^{T}(-t)x(t), 
        % \label{control1b}
	\end{array}\right.\\
    \textnormal{is NUCO}
\end{array}$
};

%Lines
\draw[->,ultra thick] (11) -- (12)
node[midway, above] {Thm. \ref{EstabilidadOriginalAdjunto}};

\draw[->,ultra thick] (12) -- (22)
node[midway, right] {Thm. \ref{reciprocodetectabilidadyobservabilidad}};

\draw[->,ultra thick] (22) -- (21)
node[midway, above] {Thm. \ref{dualNU}};

\end{tikzpicture}

\caption{Graphical scheme of the proof of Theorem \ref{teo-teorema-algo}.}
\label{fig-MainTheoProof}
\end{figure*}

\newpage 
\section{Acknowledgements}
The first author was partially funded by FONDECYT Regular Grant 1240361.

\bibliographystyle{plain}        % Include this if you use bibtex 
%\bibliography{autosam}
\bibliography{samplebib}

\begin{thebibliography}{10}

\bibitem{AEYELS1982}
D.~Aeyels.
\newblock Global observability of {M}orse-{S}male vectorfields.
\newblock {\em Journal of Differential Equations}, 45(1):1--15, 1982.

\bibitem{Anderson}
B.D.O. Anderson, A.~Ilchmann, and F.~Wirth.
\newblock Stabilizability of linear time--varying systems.
\newblock {\em Systems \& Control Letters}, 62(9):747--755, 2013.
\newblock https://doi.org/10.1016/j.sysconle.2013.05.003.

\bibitem{Anderson69}
B.D.O. Anderson and J.~B. Moore.
\newblock New results in linear system stability.
\newblock {\em SIAM Journal on Control}, 7(3):398--414, 1969.
\newblock https://doi.org/10.1137/0307029.

\bibitem{Anderson67}
B.D.O. Anderson and L.M. Silverman.
\newblock Uniform complete controllability for time--varying systems.
\newblock {\em IEEE Transactions on Automatic Control}, 12(6):790--791, 1967.
\newblock 10.1109/TAC.1967.1098759.

\bibitem{Callier}
F.~Callier and C.A. Desoer.
\newblock {\em Linear system theory}.
\newblock Springer-Verlag, New York, 1991.

\bibitem{HM}
I.~Huerta and P.~Monzón.
\newblock Detectability via observability in a nonuniform framework: dual
  relationship with controllability and stabilizability.
\newblock https://arxiv.org/abs/2508.09918.

\bibitem{HMR}
I~Huerta, P.~Monzón, and G.~Robledo.
\newblock Controllability and feedback stabilizability in a nonuniform
  framework.
\newblock {\em Comptes Rendus. Mathématique}, 362:1667--1692, 2024.
\newblock https://doi.org/10.5802/crmath.672.

\bibitem{Ikeda}
M.~Ikeda, H.~Maeda, and S.~Kodama.
\newblock Stabilization of linear systems.
\newblock {\em SIAM Journal on Control}, 10(4):716--729, 1972.
\newblock 10.1137/0310052.

\bibitem{Ikeda2}
M.~Ikeda, H.~Maeda, and S.~Kodama.
\newblock Estimation and feedback in linear time-varying systems: A
  deterministic theory.
\newblock {\em SIAM Journal on Control}, 13(2):304--326, 1975.
\newblock https://doi.org/10.1137/0313018.

\bibitem{IlchmannNote}
A~Ilchmann.
\newblock {\em Contributions To Time-Varying Linear Control Systems}.
\newblock 2005.
\newblock \url{https://www.db-thueringen.de/receive/dbt_mods_00004754}.

\bibitem{Kailath}
T.~Kailath.
\newblock {\em Linear Systems}.
\newblock Prentice-Hall, USA, 1980.

\bibitem{Kalman}
R.~E. Kalman.
\newblock Contributions to the theory of optimal control.
\newblock {\em Bol. Soc. Mat. Mexicana}, 5:102--119, 1960.
\newblock https://api.semanticscholar.org/CorpusID:845554.

\bibitem{Kalman69}
R.~E. Kalman.
\newblock {\em Lectures on controllability and observability}.
\newblock Centro Internazionale Matematico Estivo (CIME), Edizioni Cremonese,
  Roma, 1969.

\bibitem{Khalil}
H.~Khalil.
\newblock {\em Nonlinear Systems}.
\newblock Prentice-Hall, USA, 2002.

\bibitem{Kreindler}
E.~Kreindler and P.E. Sarachik.
\newblock On the concepts of controllaility and observability of linear
  systems.
\newblock {\em IEEE Transactions on Automatic Control}, 9(2):129--136, 1964.
\newblock 10.1109/TAC.1964.1105665.

\bibitem{Kwakernaak}
H.~Kwakernaak and R.~Sivan.
\newblock {\em Linear Optimal Control Systems}.
\newblock John Wiley $\&$ Sons Inc., USA, 1972.

\bibitem{Silva2023}
C.~M. Silva.
\newblock Nonuniform $\mu-$dichotomy spectrum and kinematic similarity.
\newblock {\em Journal of Differential Equations}, 375:618--652, 2023.
\newblock https://doi.org/10.1016/j.jde.2023.08.015.

\bibitem{Sontag}
E.~Sontag.
\newblock {\em Mathematical control theory, deterministic finite dimensional
  systems}.
\newblock Springer, New York, 1998.

\bibitem{Terrell}
W.~J. Terrell.
\newblock {\em Stability and Stabilization: An Introduction}.
\newblock Princeton University Press, 2009.

\bibitem{Tranninger}
M.~Tranninger, R.~Seeber, M.~Steinberger, and M.~Horn.
\newblock Uniform detectability of linear time varying systems with exponential
  dichotomy.
\newblock {\em IEEE Control Systems Letters}, 4(4):809--814, 2020.
\newblock 10.1109/LCSYS.2020.2992818.

\bibitem{Zhang2015ObservabilityCB}
L.~Zhang and Q.~Zhang.
\newblock Observability conservation by output feedback and observability
  gramian bounds.
\newblock {\em Automatica}, 60:38--42, 2015.
\newblock https://api.semanticscholar.org/CorpusID:36841023.

\bibitem{Zhang}
X.~Zhang.
\newblock Nonuniform dichotomy spectrum and normal forms for nonautonomous
  differential systems.
\newblock {\em Journal of Functional Analysis}, 267(7):1889--1916, 2014.
\newblock https://doi.org/10.1016/j.jfa.2014.07.029.

\end{thebibliography}
% and a bib file to produce the 
                                 % bibliography (preferred). The
                                 % correct style is generated by
                                 % Elsevier at the time of printing.

%\begin{thebibliography}{99}     % Otherwise use the  
                                 % thebibliography environment.
                                 % Insert the full references here.
                                 % See a recent issue of Automatica 
                                 % for the style.
%  \bibitem[Heritage, 1992]{Heritage:92}
%     (1992) {\it The American Heritage. 
%     Dictionary of the American Language.}
%     Houghton Mifflin Company.
%  \bibitem[Able, 1956]{Abl:56}
%     B.~C.~Able (1956). Nucleic acid content of macroscope. 
%     {\it Nature 2}, 7--9. 
%  \bibitem[Able {\em et al.}, 1954]{AbTaRu:54}   
%     B.~C. Able, R.~A. Tagg, and M.~Rush (1954).
%     Enzyme-catalyzed cellular transanimations.
%     In A.~F.~Round, editor, 
%     {\it Advances in Enzymology Vol. 2} (125--247). 
%     New York, Academic Press.
%  \bibitem[R.~Keohane, 1958]{Keo:58}
%     R.~Keohane (1958).
%     {\it Power and Interdependence: 
%     World Politics in Transition.}
%     Boston, Little, Brown \& Co.
%  \bibitem[Powers, 1985]{Pow:85}
%     T.~Powers (1985).
%     Is there a way out?
%     {\it Harpers, June 1985}, 35--47.

%\end{thebibliography}

%\appendix
%\section{A summary of Latin grammar}    % Each appendix must have a short title.
%\section{Some Latin vocabulary}         % Sections and subsections are supported  
                                        % in the appendices.
\end{document}